\numberwithin{equation}{section}
\newtheorem{theorem}{Theorem}[section]
\newtheorem{corollary}[theorem]{Corollary}
\newtheorem{example}[theorem]{Example}
\newtheorem{lemma}[theorem]{Lemma}
\newtheorem{proposition}[theorem]{Proposition}
\newtheorem{problem}{Problem}
\theoremstyle{definition}
\newtheorem{definition}[theorem]{Definition}
\newtheorem{remark}[theorem]{Remark}
\newcommand{\comments}[1]{}
\title{Transport multi-paths with capacity constraints}
\author{Qinglan Xia, Haotian Sun}
\address{
Department of Mathematics\\
University of California
at Davis\\
Davis, CA, 95616, USA }
\email{qlxia@math.ucdavis.edu, \ hatsun@ucdavis.edu}
\date{}
\subjclass[2020]{49Q22}
\keywords{branch transportation, good decomposition, 
transport multi-path, branching structure, rectifiable current, transport map,  map-compatible, simple common-source property.}
\begin{document}
\maketitle

\begin{abstract}

This article generalizes the study of branched/ramified optimal transportation to those with capacity constraints. 
Each admissible transport network studied here is represented by a transport multi-path between measures, with a capacity constraint on each of its components. 
The associated transport cost is given by the sum of the $\textbf{M}_{\alpha}$-cost of each component. 
Using this new formulation, we prove the existence of an optimal solution and provide an upper bound on the number of components for the solution. 
Additionally, we conduct analytical examinations of the properties (e.g. ``map-compatibility", and ``simple common-source property") of each solution component and explore the interplay among components, particularly in the discrete case.
\end{abstract}

\section{Introduction}
The optimal mass transportation problem aims to find an efficient transport system between sources and targets. 
The well-known Monge-Kantorovich transport problem has been extensively analyzed in recent years and found applications in many fields. Classical references can be found for instance in the books \cite{villani, villani2} by Villani, \cite{santambrogio} by Santambrogio, and the user's guide \cite{guide} by Ambrosio and Gigli. 
A variant of Monge-Kantorovich problem is the ramified optimal transportation (ROT) (also called branched transportation) problem which studies efficient transport systems with branching structures.  
The Eulerian formulation of the ROT problem was proposed by the first author in \cite{xia2003}, with related motivations, frameworks, and applications surveyed in \cite{xia2015}. 
An equivalent Lagrangian formulation of the problem was established by Maddalena, Morel, and Solimini in \cite{msm}. 
One may refer to \cite{book} for detailed discussions of the research in this direction.  
Some interesting recent developments on ROT can be found for example in \cite{bw, colombo2020, psx, ben}. 

In contrast to the Monge-Kantorovich problem where the transportation cost is solely determined by a transport map or a transport plan, the cost in the ramified transport problem is determined by the actual transport path. As illustrated in \cite{xia2003}, transport paths in ramified transportation between atomic measures can be viewed as weighted directed graphs. More precisely, let $X$ be a convex compact subset in an Euclidean space $\mathbb{R}^m$. An atomic measure on $X$ is in the form of 
$\sum_{i=1}^k m_i \delta_{x_i}$
with distinct points $x_i \in X$, and $m_i >0$ for each $i=1,\cdots,k.$
For two atomic measures 
\begin{equation}
\label{eq: atomic measures}
\textbf{a}= \sum_{i=1}^k m_i \delta_{x_i}  \ , \ \textbf{b}= \sum_{j=1}^\ell n_j \delta_{y_j}
\end{equation}
on $X$ of equal mass,
a transport path from $\textbf{a}$ to $\textbf{b}$ is a weighted directed graph $G=[V(G), E(G), w]$ consisting of a vertex set $V(G)$, a directed edge set $E(G)$ and a weight function 
$w: E(G) \rightarrow (0, +\infty)$ 
such that $\{x_1,x_2,\cdots,x_k \}\cup\{y_1,y_2,\cdots,y_\ell \} \subseteq V(G) $ and for any vertex $v\in V(G)$, there is a balance equation: 
\begin{equation}
\label{eqn: balanced_equation}
\sum_{ \substack{e\in E(G) \\  e^-=v }} w(e) \ = 
\sum_{ \substack{e\in E(G) \\  e^+=v}} w(e) \  +\  \left\{
    \begin{array}{ll}
        \ \ m_i  &\mbox{if } v=x_i \mbox{ for some } i=1,\cdots,k  \\
        -n_j     &\mbox{if }v=y_j \mbox{ for some } j=1,\cdots,\ell \\
        \ \ 0    &\mbox{otherwise}
    \end{array}
    \right.
\end{equation}
where $e^-$ and $e^+$ denote the starting and ending point of the edge $e\in E(G)$. Note that the condition (\ref{eqn: balanced_equation}) is equivalent
to \textit{Kirchhoff's circuit law},
or the requirement that mass is conserved at every interior vertex.

For any real number $\alpha \in [0,1]$, the $\mathbf{M}_\alpha$ cost  of 
$G =[V(G),E(G), w] $ 
is defined by
\begin{equation}
\label{eq: ramified transport cost function}
\textbf{M}_\alpha(G):=\sum_{e\in E(G)} \left(w(e)\right)^\alpha \mathcal{H}^1(e),
\end{equation}
where
$\mathcal{H}^1(e)$ is the
1-dimensional Hausdorff measure
or length of the edge $e$.

As an extension to a more general setting, a transport path between two Radon measures $\mu^{\pm}$ of equal mass can be viewed as a rectifiable $1$-current $T$ with $\partial T=\mu^+-\mu^-$, which will be discussed in detail in the preliminary section. The corresponding transport cost on $T$ is its $\alpha$-mass $\mathbf{M}_{\alpha}(T)$. Under these notations, the ramified optimal transport problem is: 
\[\label{ROT}
\tag{\textbf{ROT}} 
\text{Minimize }\mathbf{M}_{\alpha}(T)
\text{ among all } T\in Path(\mu^-, \mu^+),\]
where $Path(\mu^-, \mu^+)$ is the collection of all transport paths from $\mu^-$ to $\mu^+$. An $\mathbf{M}_\alpha$-minimizer in $Path(\mu^-, \mu^+)$ is called an $\alpha$-\textit{optimal transport path} from $\mu^-$ to $\mu^+$.

This article aims to study the impact of capacity constraints on optimal transport paths.
Regardless of whether in atomic case or general case, the amount of mass that can be transported via an admissible transport path has no restrictions. 
Hence, the phenomenon of first aggregating the total mass from the source into one place and then transporting it through a single curve is permitted and prevalent in ramified transport paths.

As opposed to the theoretically permitted unlimited aggregation of mass,
transportation in reality often takes place through various kinds of mediums, which have transport capacity limiting the maximum amount of mass they can carry all at once. 
For instance, roads have limited number of lanes for cars, and cars have limited seats for passengers.
This brings naturally the question of ramified transport paths with capacity constraints, which can be roughly described by imposing an upper bound (called the capacity) on the weight function of a weighted directed graph or on the density function of a rectifiable $1$-current. 
This motivates us to consider the following ramified transport problem in the discrete case:
Given two atomic measures $\mathbf{a}$, $\mathbf{b}$ on $X$ with equal mass, and capacity $c>0$, 
\[\label{problem 0}
\tag{Problem 0}
\text{ minimize } \mathbf{M}_\alpha(G) \text{ among all } G \in Path(\mathbf{a},\mathbf{b}) \text{  with }  w(e)\le c, \text{ for all } e\in E(G).  
\]

From the description of this problem, if we assume the total mass $\|\mathbf{a}\|=\|\mathbf{b}\| \le c$, this is equivalent to imposing no restriction on the transport capacity of $G$.  
Note that after imposing the capacity constraint, one obvious observation is that a previously well-defined transport path $G\in Path(\mathbf{a},\mathbf{b})$, which has no capacity constraints, is not necessarily an admissible transport path anymore. 
This can be demonstrated in the following example:

\begin{example}

\begin{figure}[h]
\centering
\begin{subfigure} {0.45\textwidth}
\centering
\begin{tikzpicture}[>=latex]
\filldraw[black](0,0) circle (1pt) node[anchor=north]{$x_3$};
\filldraw[black](-2,3) circle (1pt) node[anchor=south]{$x_1$};
\filldraw[black](2,3) circle (1pt) node[anchor=south]{$x_2$};
\filldraw[black](0,1.5) circle (1pt) node[anchor=east]{$x_4$};
\draw[->] (-2,3)--(0,1.5); 
\draw[->] (2,3)--(0,1.5); 
\draw[thick,->] (0,1.5)--(0,0);
\filldraw[black](0,-0.5) circle (0pt) node[anchor=north]{ };
\filldraw[black](-1,3) circle (0pt) node[anchor=north]{$\frac{1}{2}$};
\filldraw[black]( 1,3) circle (0pt) node[anchor=north]{$\frac{1}{2}$};
\filldraw[black]( 0,0.7) circle (0pt) node[anchor=east]{$1$};
\end{tikzpicture}
\caption{Y-shaped} \label{subfig: Y shaped}
\end{subfigure}
\begin{subfigure}{0.45\textwidth}
\centering
\begin{tikzpicture}[>=latex]
\filldraw[black](6,0) circle (1pt) node[anchor=north]{$x_3$};
\filldraw[black](4,3) circle (1pt) node[anchor=south]{$x_1$};
\filldraw[black](8,3) circle (1pt) node[anchor=south]{$x_2$};
\filldraw[black](5.25,1.75) circle (1pt) node[anchor=east]{$x_5$};
\draw[->] (4,3)--(5.25,1.75); 
\draw[->] (8,3)--(5.25,1.75); 
\draw[thick,->] (5.25,1.75)--(6,0);
\draw[dashed,->] (8,3) -- (6,0);
\filldraw[black](6,-0.5) circle (0pt) node[anchor=north]{ };
\filldraw[black](4.75,3) circle (0pt) node[anchor=north]{$\frac{1}{2}$};
\filldraw[black](6.5,3) circle (0pt) node[anchor=north]{$\frac{1}{6}$};
\filldraw[black](7,1.5) circle (0pt) node[anchor=north]{$\frac{1}{3}$};
\filldraw[black](5.25,1.5) circle (0pt) node[anchor=north]{$\frac{2}{3}$};
\end{tikzpicture}
\caption{Mixture of Y-shaped and V-shaped}
\label{subfig: mix Y and V}
\end{subfigure}    
\caption{Y-shaped \& Mixture of Y-shaped and V-shaped.}
\label{fig: Y and mix of Y and V}
\end{figure}
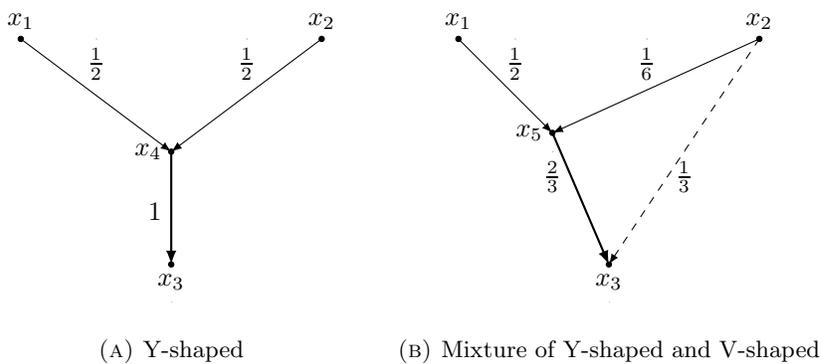

As shown in Figure \ref{fig: Y and mix of Y and V}, suppose we want to transport mass from 
$$\mathbf{a}=\frac{1}{2}\delta_{x_1}+\frac{1}{2}\delta_{x_2} \text{ to }  \mathbf{b}=\delta_{x_3},$$
with an upper bound $c=\frac{2}{3}$ imposed on weight functions. 
In this case,  a ``Y-shaped" transport path no longer satisfies the restriction on weight functions, since after merging at $x_4$ the mass will reach 1. 
Changing to another kind of branching structure which is a mixture of ``V-shaped" and ``Y-shaped" will resolve this issue. 
One of the possible cases is merging $\frac{1}{2}$ from $x_1$ and $\frac{1}{6}$ from $x_2$, and let the remaining $\frac{1}{3}$ from $x_2$ transport directly through the dash line. 
\end{example}

Another observation is about the non-compactness of the family of admissible transport paths in \ref{problem 0}, due to the ``merging" effect shown in Example \ref{ex: aggregation}. 
As a result, \ref{problem 0} may fail to have a solution.

\begin{example} \label{ex: aggregation}
Let $\textbf{a}=\delta_{x}$, $\textbf{b}=\delta_{y}$ for some $x,y \in \mathbb{R}^2$, and
suppose the transport capacity $c=1/n$ for some integer $n\ge 2$. 
As shown in Figure \ref{fig:convergence_fail}, we may construct a sequence of transport paths in $Path(\textbf{a}, \textbf{b})$ satisfying the capacity constraint, that converges to the straight line segment $\llbracket x, y \rrbracket$ with weight $1$, which does not satisfy the capacity constraint anymore.

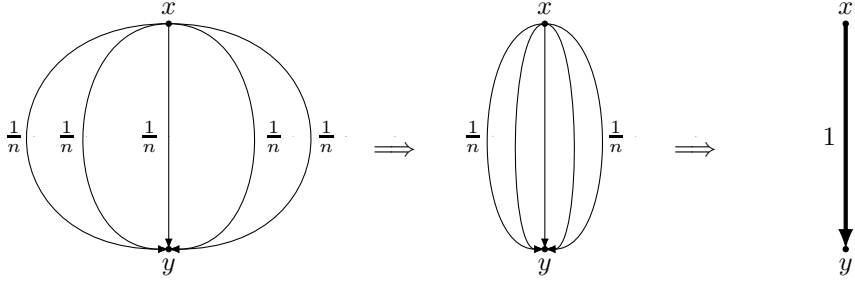
\begin{figure}[h]
\centering
\begin{tikzpicture}[>=latex]
\filldraw[black](0,0) circle (1pt) node[anchor=north]{$y$};
\filldraw[black](0,3) circle (1pt) node[anchor=south]{$x$};

\draw[->](0,3) -- (0,0);
\draw[->](0,3) .. controls (-2.5,3) and (-2.5,0) .. (0,0);
\draw[->](0,3) .. controls (-1.5,3) and (-1.5,0) .. (0,0);
\draw[->](0,3) .. controls (1.5,3) and (1.5,0) .. (0,0);
\draw[->](0,3) .. controls (2.5,3) and (2.5,0) .. (0,0);

\filldraw[black](0,1.5) circle (0pt) node[anchor = east]{$\frac{1}{n}$};
\filldraw[black](-1.1,1.5) circle (0pt) node[anchor = east]{$\frac{1}{n}$};
\filldraw[black](-1.8,1.5) circle (0pt) node[anchor = east]{$\frac{1}{n}$};
\filldraw[black](1.65,1.5) circle (0pt) node[anchor = east]{$\frac{1}{n}$};
\filldraw[black](2.35,1.5) circle (0pt) node[anchor = east]{$\frac{1}{n}$};

\filldraw[black](3,1.5) circle (0pt) node[anchor=north]{$\Longrightarrow$};

\filldraw[black](5,0) circle (1pt) node[anchor=north]{$y$};
\filldraw[black](5,3) circle (1pt) node[anchor=south]{$x$};
\draw[->](5,3) -- (5,0);
\draw[->](5,3) .. controls (4,3) and (4,0) .. (5,0);
\draw[->](5,3) .. controls (4.5,3) and (4.5,0) .. (5,0);
\draw[->](5,3) .. controls (5.5,3) and (5.5,0) .. (5,0);
\draw[->](5,3) .. controls (6,3) and (6,0) .. (5,0);

\filldraw[black](6.2,1.5) circle (0pt) node[anchor = east]{$\frac{1}{n}$};
\filldraw[black](4.3,1.5) circle (0pt) node[anchor = east]{$\frac{1}{n}$};

\filldraw[black](7,1.5) circle (0pt) node[anchor=north]{$\Longrightarrow$};

\filldraw[black](9,0) circle (1pt) node[anchor=north]{$y$};
\filldraw[black](9,3) circle (1pt) node[anchor=south]{$x$};
\draw[ultra thick,->](9,3) -- (9,0);
\filldraw[black](9,1.5) circle (0pt) node[anchor = east]{$1$};

\filldraw[black](10,0) circle (0pt) node[anchor=north]{ };
\end{tikzpicture}    
    \caption{The above pictures give an illustration of ``convergence" when $n=5$.}
    \label{fig:convergence_fail}
\end{figure}
\end{example}

To overcome this non-compactness issue, in the following, we modify \ref{problem 0} by expressing each transport network as a transport ``multi-path" $\vec{T}=(T_1,T_2,\cdots,T_k,\cdots)$ such that each component $T_k$ satisfies the given capacity constraints.

\begin{problem}[\textbf{Ramified transportation with capacity}]
\label{problem: main}
Let $\mu^-$, $\mu^+$ be two Radon measures on $ \mathbb{R}^m$ with equal mass, supported on compact sets, $\alpha \in 
(0,1)$, and $c>0$. 
Minimize  $$\mathbf{M}_\alpha(\vec{T}):=\sum_{k=1}^\infty \mathbf{M}_\alpha(T_k)$$
among all
$\vec{T}=(T_1,T_2,\cdots,T_k,\cdots)$ such that for each $k$,
\begin{equation}\label{eqn: problem_1}
T_k \in Path(\mu^-_k, \mu^+_k), \   
\sum_{k=1}^\infty \mu_k^- = \mu^-, \ \sum_{k=1}^\infty \mu_k^+ = \mu^+, and\ 0<\| \mu_k^-\| = \| \mu_k^+\| \le c.
\end{equation}
\end{problem}
Each $\vec{T}=(T_1,T_2,\cdots,T_k,\cdots)$
satisfying (\ref{eqn: problem_1}) is called a {\it transport multi-path} from $\mu^-$ to $\mu^+$ with capacity $c$. 
The family of all such transport multi-paths is denoted by $Path_c(\mu^-,\mu^+)$.

The article is organized as follows. 
We first review in $\S 2$ some related concepts in geometric measure theory and optimal transportation. In particular, the good decomposition of normal 1-currents and the map-compatibility of transport paths.  
In $\S 3$, we show in Theorem \ref{thm: existence of optimal transport paths} that Problem \ref{problem: main} indeed has a solution $\vec{T}^*=(T_1,T_2,\cdots, T_N)$, and the number of its components $N$  is bounded above by $2\|\mu^-\|/c + 1$. 

After achieving the existence result, we also study properties of the optimal multi-path $T^*$ in $\S 4$.
Note that each component $T_k$ of $T^*$ is automatically an optimal transport path from $\mu_k^-$ to $\mu_k^+$. Previously, in  \cite[Section 2.1]{xia2015}, we listed some basic properties of an optimal transport path such as the acyclic property, balance equations at each interior vertices, uniform upper bounds on the degree of vertices, and uniform lower bounds on angles between edges. Moreover, in \cite{decomposition}, we showed that each optimal transport path has a ``simple common-source" property.\footnote{A transport path $T$ satisfies the ``simple common-source" property if there exists a ``good decomposition" $\eta$ of $T$ such that the pairwise intersection of preimages of the target points under $\eta$ contains at most one point.} All these properties now automatically hold for each component $T_k$. 

To get a better description of the optimal multi-path $T^*$, we study some other properties of its components $ T_k$ in $\S 4$, that do not hold for a general optimal transport path. 
When both $\mu^-$ and $\mu^+$ are atomic measures, we first show in Theorem \ref{thm: X_j's mutually disjoint} that each $T_k$ is a map-compatible transport path with at most $N_2-1$ exceptions, where $N_2$ is the cardinality of the target measure $\mu^+$. 
Moreover, in Theorem \ref{General source overlap property}, we further generalize the simple common-source property of an optimal transport path to an analogous result for components of an optimal multi-path. In the end, we provide a detailed analysis of the single target case in $\S 5$.

\section{Preliminaries}

We first recall some related terminologies from geometric measure theory  and optimal transportation theory.

\subsection{Concepts in geometric measure theory}
\subsubsection{Currents (See \cite{Lin, Simon})}
For any an open set $U$ in $\mathbb{R}^{m}$
and $k \le m$, let $\mathcal{D}^k(U)$ be the set of all $C^\infty$ $k$-forms in $U$ with compact supports. The space $\mathcal{D}_k(U)$ of \textit{$k$-currents} 
is the dual space of $\mathcal{D}^k(U)$.

For any current $T \in \mathcal{D}_k(U)$, the \textit{mass} of $T$ is defined by
$$
\mathbf{M}(T)= \sup \{ T(\omega) \,:\,  \|\omega\| \le 1, \omega \in \mathcal{D}^k(U)\}, $$
where $$ \|\omega \| := \sup_{x\in U} \sqrt{\omega(x) \cdot \omega(x)}, \ \forall \omega \in \mathcal{D}^k(U).
$$
Also, its \textit{boundary} $\partial T \in \mathcal{D}_{k-1}(U)$ is defined by 
$$\partial T(\omega) := T(d\omega), \forall  \omega \in \mathcal{D}^{k-1}(U), \text{ when } k\ge 1 ,$$
and $\partial T :=0$ when $k=0$.

A set $M \subset \mathbb{R}^{m}$ is said to be countably \textit{$k$-rectifiable} if 
$$
M \subseteq M_0 \cup \left( \bigcup_{j=1}^\infty F_j(\mathbb{R}^k)\right),
$$
where $\mathcal{H}^k(M_0)=0$  under the $k$-dimensional Hausdorff measure $\mathcal{H}^k$, and $F_j: \mathbb{R}^k \to \mathbb{R}^{m}$ are Lipschitz functions for $j=1,2,\cdots.$ 
For any $k$-current $T\in \mathcal{D}_k(U)$, we say that $T$ is \textit{rectifiable} if for each $\omega \in \mathcal{D}^k(U)$,
$$T(\omega) = \int_M \langle \omega(x) , \xi(x) \rangle \theta(x) \,d\mathcal{H}^k(x),$$
where $M$ is an $\mathcal{H}^k$-measurable countably $k$-rectifiable subset of $U$, $\theta(x)$ is a locally $\mathcal{H}^k$-integrable positive function, called multiplicity, and $\xi$ is an orientation of $M$ in the sense that $\xi(x)$ is a simple unit $k$-vector that spans the approximate tangent space $T_xM$ for $\mathcal{H}^k$-almost every $x\in M$. We will denote such a $T$ by $\underline{\underline{\tau}}(M,\theta,\xi)$. When $T=\underline{\underline{\tau}}(M,\theta,\xi)$ is a rectifiable $k$-current, its mass
$$
\mathbf{M}(T) = \int_M \theta(x) \,d\mathcal{H}^k(x).
$$

A current $T\in \mathcal{D}_k(U)$ is said to be \textit{normal} if 
$\mathbf{M}(T) + \mathbf{M}(\partial T) < \infty$.
In \cite{Paolini}, Paolini and Stepanov introduced the concept of subcurrents: For any $T, S\in \mathcal{D}_k(U)$, $S$ is called a \textit{subcurrent} of $T$ if
$$ \mathbf{M}(T-S) + \mathbf{M}(S) = \mathbf{M}(T).$$ A normal current $T\in \mathcal{D}_k(\mathbb{R}^m)$ is \textit{acyclic} if there is no non-trivial subcurrent $S$ of $T$ such that $\partial S =0$. 

\subsubsection{Good decomposition}
In \cite{smirnov}, Smirnov showed that every acyclic normal $1$-current can be written as the weighted average of simple Lipschitz curves in the following sense.

Let $\Gamma$ be the space of $1$-Lipschitz curves $\gamma: [0,\infty) \to \mathbb{R}^m$, which are eventually constant. 
For $\gamma \in \Gamma$, we denote 
$$t_0(\gamma):=\sup \{t: \gamma \mbox{ is constant on } [0,t]  \}, \ 
t_\infty(\gamma):=\inf \{t: \gamma \mbox{ is constant on } [t,\infty)  \} ,$$
and $p_0(\gamma):= \gamma(0)$, $p_\infty(\gamma):= \gamma(\infty)=\lim_{t\to \infty} \gamma(t)$. A curve $\gamma \in \Gamma$ is simple if $\gamma(s)\not=\gamma(t)$ for every $t_0(\gamma) \le s < t \le t_\infty(\gamma)$. Also, let $Im(\gamma)$ denote the image of $\gamma$ in $\mathbb{R}^m$.
In the following contexts, we adopt the notations: for any points $x,y \in \mathbb{R}^m$ and subset $A\subseteq \mathbb{R}^m$, denote
\begin{align}
    \Gamma_x & = \{\gamma \in \Gamma : x \in \mathrm{Im}(\gamma) \}, \label{gamma_{x}}\\
    \Gamma_{x,y}&=\{\gamma\in \Gamma: p_0(\gamma)=x, \ p_\infty(\gamma)=y\}, \label{gamma_{x,y}}
    \\
    \Gamma_{A,y}&=\{\gamma\in \Gamma: p_0(\gamma)\in A, \ p_\infty(\gamma)=y\}. \label{gamma_{A,y}}
\end{align}

For each simple curve $\gamma \in \Gamma$, we may associate it with the following rectifiable $1$-current, 
\begin{equation}
\label{eqn: I_gamma}
I_\gamma:= \underline{\underline{\tau}}\left( \mathrm{Im}(\gamma), \frac{\gamma'}{|\gamma'|}, 1  \right).
\end{equation}

\begin{definition} (see \cite{colombo}, \cite{colombo2020}, \cite{smirnov})
\label{defn: good decomposition}
Let $T$ be a normal 1-current in $\mathbb{R}^m$ and let
$\eta$ be a finite positive measure on $\Gamma$
such that
\begin{equation}
\label{eqn: T_eta}
    T=\int_\Gamma I_\gamma \,d\eta(\gamma)
\end{equation}
in the sense that 
\begin{equation}
    T(\omega)=\int_\Gamma I_\gamma(\omega) \,d\eta(\gamma), \forall \omega\in \mathcal{D}^1(\mathbb{R}^m).
\end{equation}

We say that $\eta$ is a \textit{good decomposition} of $T$  if $\eta$ is supported on non-constant, simple curves and
satisfies the following equalities:
\begin{itemize}
\item[(a)] $\mathbf{M}(T)=\int_\Gamma \mathbf{M}(I_\gamma) d\eta(\gamma)=\int_\Gamma \mathcal{H}^1(Im(\gamma)) d\eta(\gamma)$;

\item[(b)] $\mathbf{M}(\partial T)=\int_\Gamma \mathbf{M}(\partial I_\gamma) d\eta(\gamma)=2 \eta(\Gamma)$.
\end{itemize}
\end{definition}

Due to \cite{smirnov} and \cite[Theorem 5.1]{Paolini}, it turns out that every acyclic normal $1$-current $T$ has a good decomposition. When $\eta$ is a good decomposition of $T$, as collected in \cite[Proposition 3.6]{colombo}, it follows that
\begin{itemize}
    \item[(a)]
    \begin{equation} \label{eqn: startingending measure}
    \mu^- = \int_\Gamma \delta_{\gamma (0)} \, d\eta(\gamma),\ 
    \mu^+ = \int_\Gamma \delta_{\gamma (\infty)} \, d\eta(\gamma).
    \end{equation}
    
    \item [(b)]
    If $T = \underline{\underline{\tau}}(M,\theta,\xi)$ is rectifiable, then
    \begin{equation}
    \label{eqn: theta(x)}
    \theta(x) = \eta (\{\gamma \in \Gamma : x \in \mathrm{Im}(\gamma) \} )
    \end{equation}
     for $\mathcal{H}^1$-a.e. $x \in M.$

    \item[(c)] For every $\tilde{\eta} \leq  \eta$, the representation
    \[\tilde{T}=\int_\Gamma I_\gamma d\tilde{\eta}(\gamma)\]
    is a good decomposition of $\tilde{T}$. Moreover, if $T=\underline
    {\underline {\tau} }\left(M, \theta, \xi\right)$ is rectifiable, then $\tilde{T}$ can be written as $\tilde{T}=\underline
    {\underline {\tau} }(M, \tilde{\theta}, \xi )$ with
    \begin{equation}\label{eqn: tilde_theta_x}
         \tilde{\theta}(x)\le \min\{\theta(x), \tilde{\eta}(\Gamma)\} 
    \end{equation}
    for $\mathcal{H}^1$-a.e. $x\in M$.
     
\end{itemize}

\subsection{Concepts in optimal transportation theory}
Now, we recall some related concepts from  Monge-Kantorovich transportation (\cite{Luigi}), and the ramified transportation (\cite{xia2003, xia2015}). 
\subsubsection{Optimal transportation problems}
Let $X$ be a convex compact subset of $\mathbb{R}^m$, and let $\mu^-$ (source) and $\mu^+$ (target) be two Radon measures supported on $X$ of equal mass.

\begin{itemize}
\item[(a)] In the Monge formulation of transportation, a transport map from $\mu^-$ to $\mu^+$ is a Borel map $\varphi: X \to X$ with $\varphi_\# \mu^- = \mu^+$. 
Denote $$Map(\mu^-,\mu^+)$$ as the set of transport maps from $\mu^-$ to $\mu^+$.
Given a non-negative Borel function $c(x,y)$ on $X \times X$, the Monge optimal transport problem is: Minimize
$$I_c(\varphi) := \int_X c(x,\varphi(x)) \,d\mu^-$$
among all $\varphi \in Map(\mu^-,\mu^+)$.

\item[(b)] In the Kantorovich formulation of transportation, a transport plan from $\mu^-$ to $\mu^+$ is a measure $\pi$ on the product space $X \times X$ with marginals $\mu^- $ and $\mu^+$.
Denote $$Plan(\mu^-,\mu^+)$$ as the set of transport plans from $\mu^-$ to $\mu^+$.
Given a cost function $c(x,y)$ on $X \times X$, the Kantorovich optimal transport problem is: Minimize
$$J_c(\pi) := \int_{X \times X}  c(x,y) \,d\pi(x,y)$$
among all $\pi \in Plan(\mu^-,\mu^+)$.
\item[(c)] In the ramified transportation, a transport path from $\mu^-$ to $\mu^+$ is a rectifiable $1$-current $T$ such that $\partial T = \mu^+ - \mu^-$.
Denote $$Path(\mu^-,\mu^+)$$ as the set of all transport paths from $\mu^-$ to $\mu^+$.
Given $\alpha \in [0,1)$, the ramified optimal transport problem is: Minimize
$$\mathbf{M}_\alpha(T) := \int_M \theta(x)^\alpha \,d\mathcal{H}^1$$
among all $T=\underline{\underline{\tau}}(M,\theta,\xi)\in Path(\mu^-,\mu^+)$.
\end{itemize}

\subsubsection{Compatibility and the simple common-source property}
In \cite{xia2003,decomposition}, we studied a special kind of transport paths that are compatible with a transport map (or a transport plan).

\begin{definition}(\cite[Definition 5.3]{decomposition})
\label{def: Radon_compatibility}
Let $\mu$ and $\nu$ be two Radon measures on $X$ of equal mass and $T \in Path (\mu,\nu)$.  
\begin{itemize}
    \item[(a)] For any $\pi \in Plan (\mu,\nu)$, if there exists a finite Borel measure $\eta$ on $\Gamma$ such that 
$$T = \int_\Gamma  I_\gamma d\eta, \text{ and } \pi = \int_\Gamma \delta_{ ( p_0(\gamma), p_\infty(\gamma) ) } d\eta,$$
then we say the pair $(T, \pi) $ is compatible (with respect to $\eta$).
\item[(b)] For any $\varphi \in Map (\mu,\nu)$, we say the pair $(T,\varphi)$ is compatible if $(T, \pi_\varphi)$ is compatible, where 
$\pi_\varphi = (id \times \varphi)_{\#}\mu$.
\item[(c)] $T \in Path (\mu,\nu)$ is called map-compatible if there exists a map $\varphi \in Map (\mu,\nu)$ such that  $(T,\varphi)$ is compatible.
\end{itemize}
\end{definition}

We now characterize map-compatible transport paths in the case of atomic measures. 
Let 
\begin{equation}\label{eqn: measures}
\mu^- = \sum_{i=1}^{N_1} m'_i \delta_{x_i} \text{ and }\mu^+ = \sum_{j=1}^{N_2} m_j \delta_{y_j} \text{ with } \sum_{i=1}^{N_1} m'_i = \sum_{j=1}^{N_2} m_j < \infty
\end{equation}
be two atomic measures on $X$ with $N_1, N_2 \in \mathbb{N} \cup \{\infty\}$.  For each $i=1,2,\cdots, N_1$, $j=1,2,\cdots, N_2$, as given in (\ref{eqn: measures}), let $\Gamma_{x_i, y_j}$ denote the set of all $1$-Lipschitz curves in $\Gamma$ from $x_i$ to $y_j$. 
For any finite Borel measure $\eta$ of $T$, and each $y_j \in \{y_1,y_2,\cdots,y_{N_2}\}$, denote
\begin{equation}
 \label{eqn: X_j}   
X_j(\eta) := \{x_i\in X : \eta(\Gamma_{x_i,y_j})>0\}.
\end{equation}

Using this notation, we now characterize map-compatible transport paths in $Path(\mu^-, \mu^+)$ as follows:
\begin{proposition}
\label{prop: X_1 intersect X_2 = empty gives transport map}
Let $\mu^-$ and $\mu^+$ be two atomic measures of equal mass as given in (\ref{eqn: measures}), and $T\in Path(\mu^-,\mu^+)$.

\begin{itemize}

\item[(1)] If $T$ is map-compatible with respect to a finite Borel measure $\eta$,
then $$|X_{j_1}(\eta) \cap X_{j_2}(\eta)|=0 \text{ for any } 1 \le j_1 < j_2 \le N_2.$$

\item[(2)] If $T$ has a good decomposition $\eta$ such that $|X_{j_1}(\eta) \cap X_{j_2}(\eta)|=0$ for any $1 \le j_1 < j_2 \le N_2$, then $T$ is map-compatible.
\end{itemize}
\end{proposition}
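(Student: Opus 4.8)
The plan is to unwind the compatibility definition into a statement about which targets each source reaches under $\eta$, observing that $X_j(\eta)$ records exactly the sources $x_i$ sending positive $\eta$-mass to $y_j$ through the fibres $\Gamma_{x_i,y_j}=\{\gamma:p_0(\gamma)=x_i,\ p_\infty(\gamma)=y_j\}$.

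For part (1), suppose $(T,\varphi)$ is compatible with respect to the finite Borel measure $\eta$, so that
$$\pi_\varphi=(id\times\varphi)_\#\mu^-=\int_\Gamma \delta_{(p_0(\gamma),p_\infty(\gamma))}\,d\eta.$$
Since $\mu^-$ is atomic, $\pi_\varphi=\sum_i m'_i\,\delta_{(x_i,\varphi(x_i))}$ is supported on the graph of $\varphi$. I would evaluate both sides on the singleton $\{(x_i,y_j)\}$, which gives $\eta(\Gamma_{x_i,y_j})=m'_i$ when $y_j=\varphi(x_i)$ and $\eta(\Gamma_{x_i,y_j})=0$ otherwise. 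Hence $X_j(\eta)=\varphi^{-1}(y_j)$, and because $\varphi$ is a map these preimages are pairwise disjoint, so $|X_{j_1}(\eta)\cap X_{j_2}(\eta)|=0$ whenever $j_1\neq j_2$.

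For part (2), I would invoke the good-decomposition identities (\ref{eqn: startingending measure}). Since $\mu^\pm$ are supported on the atoms, $\eta$-a.e.\ curve runs from some $x_i$ to some $y_j$, whence $m'_i=\sum_j \eta(\Gamma_{x_i,y_j})$ and $m_j=\sum_i \eta(\Gamma_{x_i,y_j})$. The hypothesis that the $X_j(\eta)$ are pairwise disjoint means each source lies in at most one of them, while $m'_i>0$ forces it to lie in at least one; thus each $x_i$ belongs to a unique $X_{j(i)}(\eta)$, and I set $\varphi(x_i):=y_{j(i)}$. Disjointness then gives $m'_i=\eta(\Gamma_{x_i,y_{j(i)}})$, so summing over each fibre $\varphi^{-1}(y_j)$ and using $m_j=\sum_i\eta(\Gamma_{x_i,y_j})$ yields $\varphi_\#\mu^-=\mu^+$, i.e.\ $\varphi\in Map(\mu^-,\mu^+)$. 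The same disjointness collapses $\int_\Gamma\delta_{(p_0(\gamma),p_\infty(\gamma))}\,d\eta$ to $\sum_i m'_i\,\delta_{(x_i,y_{j(i)})}=\pi_\varphi$, so $(T,\pi_\varphi)$ is compatible with respect to $\eta$ and $T$ is map-compatible.

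The argument is essentially measure-theoretic bookkeeping, so the one place demanding care is the mass-preservation of $\varphi$ in part (2): it is not enough that $\varphi$ is well defined on the atoms, I must check $\varphi_\#\mu^-=\mu^+$, and this is exactly where the good-decomposition identity $m_j=\sum_i\eta(\Gamma_{x_i,y_j})$ (equivalently $\mu^+=\int_\Gamma\delta_{p_\infty(\gamma)}\,d\eta$) is indispensable. A subordinate point to track is that ``$\eta$-a.e.\ curve ends at some atom $y_j$'' follows from $\mu^+$ being atomic, so the countable sums over $j$ capture the full mass up to an $\eta$-null set.
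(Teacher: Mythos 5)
Your proof is correct and follows essentially the same route as the paper's: in part (1) you identify $X_j(\eta)$ with $\varphi^{-1}(\{y_j\})$ by testing the identity $\pi_\varphi=\int_\Gamma \delta_{(p_0(\gamma),p_\infty(\gamma))}\,d\eta$ on atom pairs, and in part (2) you build $\varphi$ by sending each $x_i$ to the unique $y_{j(i)}$ with $x_i\in X_{j(i)}(\eta)$ and verify $\varphi_\#\mu^-=\mu^+$ and compatibility, exactly as the paper does. The only (cosmetic) difference is that where the paper cites its companion result to write $\pi_\varphi=\sum_{i,j}\eta(\Gamma_{x_i,y_j})\delta_{(x_i,y_j)}$, you obtain the same information directly by evaluating both measures on singletons, and you make explicit the well-definedness points (each $x_i$ lies in exactly one $X_j(\eta)$, and $\eta$-a.e.\ curve joins atoms) that the paper leaves implicit.
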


\begin{proof}
(1) Suppose $\varphi \in Map(\mu^-,\mu^+)$ and $(T, \varphi)$ is compatible.
By Definition \ref{def: Radon_compatibility}, there exists a finite Borel measure $\eta$ on $\Gamma$ such that 
$$T = \int_\Gamma  I_\gamma d\eta, \   \pi_\varphi = \int_\Gamma \delta_{ ( p_0(\gamma), p_\infty(\gamma) ) } d\eta ,$$
where $\pi_\varphi = (id \times \varphi)_\# \mu^-$.

Since $\mu^-,\mu^+$ are atomic measures as stated in (\ref{eqn: measures}), by using the proof given in \cite[Proposition 5.4]{decomposition}, it follows that
\begin{equation}
\label{eq: eta format}
\pi_\varphi = \sum_{i,j} \eta (\Gamma_{x_i,y_j}) \delta_{(x_i,y_j)}.    
\end{equation}
Also, direct calculation gives
$$\pi_\varphi (\{x_i\} \times \{y_j\}) = (id \times \varphi)_\# \mu^- (\{x_i\} \times \{y_j\}) = \mu^- (\{x_i\} \cap \varphi^{-1}(\{y_j\})),$$
so that $\pi_\varphi (\{x_i\} \times \{y_j\})= \mu^- (\emptyset) =0$ when $\varphi(x_i)\not=y_j$, and $\pi_\varphi (\{x_i\} \times \{y_j\})= \mu^- (\{x_i\}) >0$ when $\varphi(x_i)=y_j$.
Using equation (\ref{eq: eta format}), we have 
$$\eta (\Gamma_{x_i,y_j})=0  \text{ when }  \varphi(x_i)\not=y_j , \text{ and } \eta (\Gamma_{x_i,y_j})>0 \text{ when } \varphi(x_i)=y_j.$$
Hence, for each $1 \le j \le N_2$, $$ X_j(\eta) := \{x_i\in X : \eta (\Gamma_{x_i,y_j}) >0 \} = \{x_i \in X : \varphi(x_i)=y_j\} = \varphi^{-1}(\{y_j\}).$$
Since $\varphi$ is a map from $\{x_1,x_2,\cdots,x_{N_1}\}$ to $\{y_1,y_2,\cdots,y_{N_2}\}$, for $1 \le j_1 < j_2 \le N_2$ we have  
$$X_{j_1}(\eta) \cap X_{j_2}(\eta) = \varphi^{-1}(\{y_{j_1}\}) \cap \varphi^{-1}(\{y_{j_2}\}) = \emptyset.$$

(2) Now, suppose there exists a good decomposition $\eta$ of $T$ such that
$|X_{j_1}(\eta) \cap X_{j_2}(\eta)|=0$ for any $1 \le j_1 < j_2 \le N_2$. In particular,
\begin{equation}
\label{eqn: T}
    T=\int_\Gamma I_\gamma d\eta.
\end{equation}
In this case,  we may define 
$$\varphi : \bigcup_{j=1}^{N_2}  X_j (\eta)=\{x_1,x_2,\cdots, x_{N_1}\}  \to \{y_1,y_2,\cdots,y_{N_2}\}, $$
by setting $ \varphi(x):=y_j $  for  $x\in X_j(\eta)$. Note that $\varphi_\# \mu^- = \mu^+$ because by direct calculation,  
$$\varphi_\# \mu^- (\{y_j \}) =  
\mu^- (X_j(\eta)) =  
\sum_{i=1}^{N_1} \eta(\Gamma_{x_i,y_j}) =
\mu^+(\{y_j\}),
$$
for each $j=1,2,\cdots, N_2$. 
Moreover,
\begin{eqnarray*}
(id \times \varphi)_\# \mu^- 
&=& 
(id \times \varphi)_\# 
\left(\sum_{i=1}^{N_1} \left(\sum_{j=1}^{N_2}  \eta(\Gamma_{x_i,y_j})\right)\delta_{x_i}\right) \\
&=&
\sum_{i=1}^{N_1} \sum_{j=1}^{N_2}  \eta(\Gamma_{x_i,y_j}) (id \times \varphi)_\#\delta_{x_i} \\
&=& 
\sum_{j=1}^{N_2}
\left(
\sum_{i=1}^{N_1}   \eta(\Gamma_{x_i,y_j}) (id \times \varphi)_\#\delta_{x_i} \right)\\
&=&
\sum_{j=1}^{N_2} \left(\sum_{x_i \in X_j(\eta)} \eta(\Gamma_{x_i,y_j}) \delta_{(x_i,y_j)}\right)  \\
&=&
\sum_{j=1}^{N_2} \left( \sum_{x_i \in X_j(\eta)} \int_{\Gamma_{x_i,y_j} }  \delta_{ (p_0(\gamma)  , p_\infty(\gamma) )}\, d\eta \right) \\
&=&
\sum_{j=1}^{N_2} \sum_{i =1}^{N_1} \int_{\Gamma_{x_i,y_j} }  \delta_{ (p_0(\gamma)  , p_\infty(\gamma) )}\, d\eta=\int_{\Gamma}\delta_{ (p_0(\gamma)  , p_\infty(\gamma) )}\, d\eta, 
\end{eqnarray*}
which implies $(T, \varphi)$ is compatible by (\ref{eqn: T}).

\end{proof}

Motivated by Proposition \ref{prop: X_1 intersect X_2 = empty gives transport map}, we are also interested in transport path $T$ that has a good decomposition $\eta$ such that
\begin{equation}
\label{eqn: X_j_intersection}
    |X_{j_1}(\eta)\cap X_{j_2}(\eta)|\le 1, \text{ for each pair } 1\le j_1<j_2\le N.
\end{equation}
The inequality (\ref{eqn: X_j_intersection}) says that the intersection $X_{j_1}(\eta)\cap X_{j_2}(\eta)$ is either empty or a singleton. In other words, every two target points can have at most one common source point. 

A transport path $T$ is said to have the {\it simple common-source} property if there exists a good decomposition $\eta$ of $T$ that satisfies the inequality (\ref{eqn: X_j_intersection}).
We claim that each optimal transport path has this property. To see this, we first recall the definition of cycle-free paths:

\begin{definition} (\cite[Definition 4.2]{decomposition})
\label{def: cycle-free current}
Let $T=\underline{\underline{\tau}}(M,\theta,\xi)$ and $S=\underline{\underline{\tau}}(N,\phi,\zeta)$ be two real rectifiable $k$-currents.
\begin{itemize}
    \item[(a)] We say $S$ is
{\it on} $T$ if $\mathcal{H}^k(N\setminus M)=0$, and $\phi(x)\le \theta(x)$ for $\mathcal{H}^k$ almost every $ x\in N$. 
\item[(b)] $S$ is called a {\it cycle} on $T$ if $S$ is on $T$ and $\partial S=0$.
\item[(c)] $T$ is called {\it cycle-free} if except for the zero current, there is no other cycle on $T$.
\end{itemize} 
\end{definition}

As stated in \cite[Remark 4.3]{decomposition}, the concept of ``cycle-free" is different from ``acyclic".  A cycle-free current is automatically acyclic, but not vice versa.  Also, by \cite[Corollary 4.5]{decomposition}, each optimal transport path is automatically cycle-free.

In \cite[Propositions 4.6--4.7]{decomposition}, we showed that
\begin{proposition}
\label{prop: single component less than 1 point}
Each cycle-free transport path $T\in Path(\mu^-, \mu^+)$ has a good decomposition $\eta$ that satisfies (\ref{eqn: X_j_intersection}).
\end{proposition}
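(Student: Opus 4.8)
The plan is to start from an arbitrary good decomposition of $T$ and repeatedly ``uncross'' it until the obstruction to (\ref{eqn: X_j_intersection}) disappears, using the cycle-free hypothesis precisely to guarantee that each uncrossing step is legal. First I would fix any good decomposition $\eta$ of $T$, which exists because a cycle-free current is in particular acyclic (as noted after Definition \ref{def: cycle-free current}) and every acyclic normal $1$-current admits a good decomposition. I would then translate (\ref{eqn: X_j_intersection}) into a combinatorial statement about the induced plan $\pi_{ij}:=\eta(\Gamma_{x_i,y_j})$: condition (\ref{eqn: X_j_intersection}) fails exactly when there are indices $i_1\neq i_2$, $j_1\neq j_2$ with $\pi_{i_1j_1},\pi_{i_1j_2},\pi_{i_2j_1},\pi_{i_2j_2}$ all positive, i.e.\ a $2\times 2$ ``rectangle'' (a $4$-cycle $x_{i_1}\,y_{j_1}\,x_{i_2}\,y_{j_2}$) in the support of $\pi$. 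Thus the goal becomes: produce a good decomposition of $T$ whose induced plan support contains no such rectangle.

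The heart of the argument is an uncrossing lemma in which cycle-freeness is invoked. Suppose the support of $\pi$ has such a rectangle, and pick curves $\gamma_1\in\Gamma_{x_{i_1},y_{j_1}}$, $\gamma_2\in\Gamma_{x_{i_1},y_{j_2}}$, $\gamma_3\in\Gamma_{x_{i_2},y_{j_1}}$, $\gamma_4\in\Gamma_{x_{i_2},y_{j_2}}$ in $\mathrm{supp}\,\eta$. Because $\eta$ is a good decomposition, all curves traverse each common point in the same direction (this is what underlies the multiplicity formula (\ref{eqn: theta(x)})), so two curves sharing an endpoint coincide along an initial or terminal segment and then separate for good. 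I would consider the signed loop $L:=I_{\gamma_1}-I_{\gamma_2}+I_{\gamma_4}-I_{\gamma_3}$, whose boundary is $0$. Along the four shared segments the contributions cancel in pairs, so $L$ reduces to the ``private'' arcs of the four curves, each carried with multiplicity one. If the two diagonal pairs $\{\gamma_1,\gamma_4\}$ and $\{\gamma_2,\gamma_3\}$ were both disjoint, these private arcs would be pairwise disjoint and $L$ would be a nonzero rectifiable $1$-current supported on $M$ with multiplicity bounded by $\theta$, hence (after scaling down to a small amount of mass) a nonzero cycle on $T$ in the sense of Definition \ref{def: cycle-free current} --- contradicting that $T$ is cycle-free (the degenerate case $L=0$ forces the four curves through a single common point, which likewise furnishes a crossing). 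Therefore one diagonal pair must meet at a common point $z$, and I may tail-swap at $z$: replacing the portions after $z$ yields curves $x_{i_1}\to z\to y_{j_2}$ and $x_{i_2}\to z\to y_{j_1}$ with the union of edges and orientations unchanged, so the swapped measure is still a good decomposition of the same $T$.

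Finally I would run the swap as a descent. Tail-swapping along whichever diagonal crosses moves flow from two opposite corners of the rectangle onto the other two corners, which are already positive; hence no new positive entry of $\pi$ is created, while (choosing the swapped mass equal to $\min$ of the two corner weights) at least one corner is driven to $0$. The number of positive entries of $\pi$ --- at most $N_1N_2$ in the finite atomic case --- therefore strictly decreases, and after finitely many steps the support contains no rectangle, giving a good decomposition satisfying (\ref{eqn: X_j_intersection}). For countably supported $\mu^\pm$ the same scheme would be applied after an exhaustion and limit argument.

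The step I expect to be the main obstacle is upgrading the pointwise tail-swap into a swap of the full mass $\min(\pi_{i_1j_1},\pi_{i_2j_2})$: the uncrossing lemma only furnishes one common point for one pair of curves, whereas eliminating a positive entry requires rerouting the entire curve-bundles $\eta|_{\Gamma_{x_{i_1},y_{j_1}}}$ and $\eta|_{\Gamma_{x_{i_2},y_{j_2}}}$ at once. Carrying this out at the level of currents --- isolating the common part of the two bundles, rerouting all of it simultaneously, and checking that the good-decomposition equalities (a)--(b) of Definition \ref{defn: good decomposition} persist so that the descent cannot stall through ever-smaller swaps --- is the technical crux, and it is exactly there that cycle-freeness (via the loop $L$) must be exploited quantitatively rather than merely qualitatively.
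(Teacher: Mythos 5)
First, note that the paper itself does not prove Proposition \ref{prop: single component less than 1 point} in this text---it is imported from \cite{decomposition}---so your argument must stand on its own, and it has genuine gaps, both located at the uncrossing lemma you call the heart of the argument. (i) To contradict cycle-freeness you must exhibit a nonzero cycle \emph{on} $T$ in the sense of Definition \ref{def: cycle-free current}, i.e.\ supported in $M$ with multiplicity at most $\theta$ $\mathcal{H}^1$-a.e. Your loop $L=I_{\gamma_1}-I_{\gamma_2}+I_{\gamma_4}-I_{\gamma_3}$ carries multiplicity $1$ on its private arcs, while by (\ref{eqn: theta(x)}) $\theta(x)=\eta(\Gamma_x)$, which for non-atomic $\eta$ need not have any positive lower bound along a single curve of $\operatorname{supp}\eta$: for instance $T=\underline{\underline{\tau}}([0,1],t,\xi)$, decomposed by the curves $[s,1]$, $s\in[0,1]$, with $\eta$ the image of Lebesgue measure, is cycle-free and has $\theta\to 0$ at the left endpoint of the curve $[0,1]\in\operatorname{supp}\eta$. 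Hence no constant rescaling $\epsilon L$ is on $T$, and a non-constant rescaling destroys $\partial(\epsilon L)=0$, so the contradiction never materializes. (Individual curves of $\operatorname{supp}\eta$ also need not have image in $M$ up to $\mathcal{H}^1$-null sets; only $\eta$-a.e.\ curves do.) (ii) The cancellation structure of $L$ rests on your claim that two curves of a good decomposition sharing an endpoint coincide along a segment and then separate for good. That is \emph{not} a consequence of Definition \ref{defn: good decomposition}: two curves may run together, separate around the two halves of a circle, and rejoin, and this is a good decomposition of their sum; excluding it requires cycle-freeness, via exactly the mechanism that fails in (i). Finally, even granting an intersection point, tail-swapped concatenations need not be simple curves, and---as you yourself flag---the single-point swap does not reroute the full bundles, so the descent never gets started.

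All of these problems, including your acknowledged crux, are resolved by running the scheme on bundles rather than individual curves, which is precisely the perturbation device of Theorems \ref{thm: X_j's mutually disjoint} and \ref{General source overlap property}. Given a rectangle, write $\eta_{ab}:=\eta(\Gamma_{x_{i_a},y_{j_b}})>0$ and $T_{ab}:=\int_{\Gamma_{x_{i_a},y_{j_b}}}I_\gamma\,d\eta$; by (\ref{eqn: tilde_theta_x}) each $T_{ab}$ is on $T$ with orientation $\xi$ and multiplicity at most $\min\{\theta,\eta_{ab}\}$. For $\epsilon>0$ small, $S:=\epsilon\left(T_{11}/\eta_{11}-T_{12}/\eta_{12}+T_{22}/\eta_{22}-T_{21}/\eta_{21}\right)$ is on $T$ and $\partial S=0$, so cycle-freeness yields---not a contradiction, but the identity $T_{11}/\eta_{11}+T_{22}/\eta_{22}=T_{12}/\eta_{12}+T_{21}/\eta_{21}$. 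This identity \emph{is} the full-mass swap: assuming $\eta_{12}\le\eta_{21}$, the measure $\eta':=\eta-\eta|_{\Gamma_{x_{i_1},y_{j_2}}}-(\eta_{12}/\eta_{21})\,\eta|_{\Gamma_{x_{i_2},y_{j_1}}}+(\eta_{12}/\eta_{11})\,\eta|_{\Gamma_{x_{i_1},y_{j_1}}}+(\eta_{12}/\eta_{22})\,\eta|_{\Gamma_{x_{i_2},y_{j_2}}}$ decomposes the same $T$, satisfies (a)--(b) of Definition \ref{defn: good decomposition} (all four bundles share the orientation $\xi$, so masses add exactly, and $\eta'(\Gamma)=\eta(\Gamma)$), is supported on the original simple curves, annihilates the entry $\eta(\Gamma_{x_{i_1},y_{j_2}})$, and creates no new positive entries. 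With this replacement your descent terminates whenever $N_1N_2<\infty$; the countably infinite case still needs the limiting argument you only gesture at. In short, your skeleton---rectangles, uncrossing via cycle-freeness, descent---is the right one, but both of its key steps fail as written and need the bundle formulation to go through.
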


As a result, each cycle-free transport path, in particular each optimal transport path, has the simple common-source property.

\section{Existence of an optimal transport multi-path}

In this section, we will show that Problem \ref{problem: main} has a solution. 
We first note that, by (\ref{eqn: problem_1}), for any multi-path $\vec{T}=\left(T_1,T_2,\cdots, T_k, \cdots\right)\in Path_c(\mu^-, \mu^+)$, the series
$$\sum_{k=1}^\infty T_k \in Path(\mu^-,\mu^+)$$
provided that the series is convergent as currents, i.e.,  for any $\omega \in \mathcal{D}^1 (\mathbb{R}^m)$, the series $\sum_{k=1}^\infty T_k(\omega)$ of real numbers converges.

\begin{lemma} 
\label{lem: M alpha cost triagle inequality}
For any convergent series $ \sum_{i=1}^\infty T_i$ of rectifiable 1-currents, if $0\le \alpha \le 1$ then
$$\mathbf{M}_\alpha\left(\sum_{k=1}^\infty T_k\right) \le \sum_{k=1}^\infty \mathbf{M}_\alpha(T_k). $$
\end{lemma}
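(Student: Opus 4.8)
The goal is to show subadditivity of the $\mathbf{M}_\alpha$-cost. The key observation is that $\mathbf{M}_\alpha$ is built from the concave function $t \mapsto t^\alpha$ applied to the multiplicity, and that multiplicities of rectifiable currents add up when the currents are summed (at least up to orientation cancellations that can only decrease mass). So the plan is to reduce the infinite-series statement to the pointwise inequality for the integrand, using the rectifiable representation $\mathbf{M}_\alpha(T) = \int_M \theta(x)^\alpha \, d\mathcal{H}^1$.

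\begin{proof}[Proof sketch]
Write $T := \sum_{k=1}^\infty T_k$. The idea is to realize all the $T_k$ and their sum on a common rectifiable set. Let each $T_k = \underline{\underline{\tau}}(M_k, \theta_k, \xi_k)$, and set $M := \bigcup_{k=1}^\infty M_k$, which is again countably $1$-rectifiable. Extending each multiplicity by zero off $M_k$, we may regard each $\theta_k$ as a nonnegative $\mathcal{H}^1$-measurable function on $M$, with a fixed orientation $\xi$ of $M$; then on $M$ the current $T_k$ is represented (up to sign) by the signed density $\varepsilon_k(x)\theta_k(x)$ where $\varepsilon_k(x) = \pm 1$ records whether $\xi_k$ agrees with $\xi$ at $x$. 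Since the series converges as currents, the resulting density of $T$ is $\theta(x) = \bigl|\sum_{k} \varepsilon_k(x)\theta_k(x)\bigr|$ for $\mathcal{H}^1$-a.e. $x \in M$.

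The heart of the matter is then the pointwise estimate, valid for $0 \le \alpha \le 1$ and nonnegative reals $a_k$ with $\sum_k a_k < \infty$,
\[
\left(\sum_{k=1}^\infty a_k\right)^{\alpha} \le \sum_{k=1}^\infty a_k^{\alpha}.
\]
This is the subadditivity of $t \mapsto t^\alpha$, which follows from concavity together with $0^\alpha = 0$: for two terms, $(a+b)^\alpha \le a^\alpha + b^\alpha$ is standard, and the general finite case follows by induction; the infinite case follows by taking the limit of the partial sums, using monotone convergence on the right-hand side. Applying this with $a_k = \theta_k(x)$ and using $\theta(x) = |\sum_k \varepsilon_k\theta_k(x)| \le \sum_k \theta_k(x)$ together with monotonicity of $t \mapsto t^\alpha$ gives
\[
\theta(x)^{\alpha} \le \left(\sum_{k=1}^\infty \theta_k(x)\right)^{\alpha} \le \sum_{k=1}^\infty \theta_k(x)^{\alpha}
\]
for $\mathcal{H}^1$-a.e. $x \in M$. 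Integrating over $M$ against $\mathcal{H}^1$ and interchanging the sum and integral on the right by the Tonelli/monotone convergence theorem (all integrands are nonnegative) yields
\[
\mathbf{M}_\alpha(T) = \int_M \theta^\alpha \, d\mathcal{H}^1 \le \int_M \sum_{k=1}^\infty \theta_k^\alpha \, d\mathcal{H}^1 = \sum_{k=1}^\infty \int_{M_k} \theta_k^\alpha \, d\mathcal{H}^1 = \sum_{k=1}^\infty \mathbf{M}_\alpha(T_k),
\]
which is the desired inequality.
\end{proof}

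The step I expect to require the most care is the bookkeeping in the first paragraph: justifying that all $T_k$ and $T$ can be represented on the common rectifiable set $M$ with a single orientation $\xi$, and that the density of the limit current is indeed $\bigl|\sum_k \varepsilon_k \theta_k\bigr|$. This is where the hypothesis that the series converges as currents is used, and where one must be careful that cancellations among oppositely oriented pieces only help the inequality (they lower $\theta$, hence lower $\theta^\alpha$). Everything after the reduction to the pointwise inequality is routine measure theory, and the infinite-series version is no harder than the two-term case once the common representation is in place.
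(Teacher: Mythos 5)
Your proof is correct and follows essentially the same route as the paper: represent all $T_k$ on the common rectifiable set $\bigcup_k M_k$ with multiplicities extended by zero, apply the pointwise subadditivity $(\sum_k \theta_k)^\alpha \le \sum_k \theta_k^\alpha$ valid for $0\le\alpha\le1$, and conclude by interchanging sum and integral. If anything, your explicit treatment of orientation cancellations via the signed densities $\varepsilon_k\theta_k$ (noting they can only decrease the limit density $\theta$) is slightly more careful than the paper's, which leaves that point implicit.
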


\begin{proof} 
Suppose $T_k = \underline{\underline{\tau}}(M_k,\theta_k,\xi_k ) $, and let $\omega \in \mathcal{D}^1 (\mathbb{R}^m)$, then
\[
T_k(\omega) = \int_{M_k} \langle \omega(x), \xi_k(x)\rangle \theta_k(x) \, d\mathcal{H}^1(x),
\] 
and 
\begin{eqnarray*}
\sum_{k=1}^\infty T_k (\omega) 
&=& 
\sum_{k=1}^\infty \int_{M_k} \left\langle \omega(x), \xi_k(x) \right\rangle  \theta_k(x) \,d\mathcal{H}^1(x) \\
&=&
\int_{\bigcup\limits_{k=1}^\infty M_k} \left\langle \omega(x), \sum_{k=1}^\infty \xi_k(x)\theta_k (x)  \right\rangle  \,d\mathcal{H}^1(x) . 
\end{eqnarray*}
Here, we adopt the convention that for each $k$, $\theta_k(x)=0$ when $x \not\in M_k$.  
Since $0\le \alpha \le 1$, then for each $n \in \mathbb{N}$,
$$\left( \sum_{k=1}^n \theta_k(x) \right)^\alpha \le \sum_{k=1}^n \theta_k(x)^\alpha \le \sum_{k=1}^\infty \theta_k(x)^\alpha ,$$
so that 
$$\left( \sum_{k=1}^\infty \theta_k(x) \right)^\alpha = 
\lim_{n \to \infty} \left( \sum_{k=1}^n \theta_k(x) \right)^\alpha \le 
\sum_{k=1}^\infty \theta_k(x)^\alpha .$$
Therefore,
\begin{eqnarray*}
\mathbf{M}_\alpha \left( \sum_{k=1}^\infty T_k \right) 
&\le & 
\int_{\bigcup\limits_{k=1}^\infty M_k} \left(\sum_{k=1}^\infty\theta_k (x)\right)^\alpha d\mathcal{H}^1(x)  \\
&\le& 
\int_{\bigcup\limits_{k=1}^\infty M_k} \sum_{k=1}^\infty \theta_k(x)^\alpha d\mathcal{H}^1(x)  
=
\sum_{k=1}^\infty \int_{\bigcup\limits_{k=1}^\infty M_k} \theta_k(x)^\alpha d\mathcal{H}^1(x) \\
&=&
\sum_{k=1}^\infty \int_{M_k} \theta_k(x)^\alpha d\mathcal{H}^1(x) 
=\sum_{k=1}^{\infty} \mathbf{M}_\alpha(T_k).
\end{eqnarray*}
\end{proof}

\begin{lemma}
\label{lemma: finite_n}
For any transport capacity $c>0$ and any $\vec{T}\in Path_c(\mu^-,\mu^+)$, there exists a constant $N(c)\in \mathbb{N}$ with
\[N(c) < \frac{2\|\mu^-\|}{c} +1,\]
and $\vec{T'}=(T'_1,T'_2,\cdots,T'_{N(c)})\in Path_c(\mu^-,\mu^+)$ such that $\mathbf{M}_\alpha(\vec{T'}) \le \mathbf{M}_\alpha(\vec{T})$.
\end{lemma}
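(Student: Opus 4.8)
The plan is to consolidate the (possibly infinitely many) components of $\vec T$ into finitely many groups, replace each group by the single current obtained as the sum of its members, and then use the subadditivity of Lemma \ref{lem: M alpha cost triagle inequality} to bound the cost and a bin-packing estimate to bound the number of groups. In the degenerate case $\mathbf{M}_\alpha(\vec T)=\infty$ it suffices to exhibit any finite-component element of $Path_c(\mu^-,\mu^+)$, which exists by splitting $\mu^{\pm}$ into at most $N(c)$ mass-$\le c$ pairs and joining each pair by a transport path; so I assume $\mathbf{M}_\alpha(\vec T)=\sum_k\mathbf{M}_\alpha(T_k)<\infty$. First I would replace each $T_k$ by an $\alpha$-optimal transport path in $Path(\mu_k^-,\mu_k^+)$; this does not enlarge $\mathbf{M}_\alpha(T_k)$, keeps the boundary, and makes $T_k$ cycle-free, hence acyclic, so that a good decomposition $\eta_k$ exists. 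By \eqref{eqn: startingending measure}, $\eta_k(\Gamma)=\|\mu_k^-\|\le c$, and then \eqref{eqn: theta(x)} gives the uniform multiplicity bound $\theta_k(x)=\eta_k(\Gamma_x)\le c$ for a.e. $x$. Because $0<\alpha<1$ and $\theta_k\le c$, we have $\theta_k\le c^{1-\alpha}\theta_k^{\alpha}$, so $\mathbf{M}(T_k)\le c^{1-\alpha}\mathbf{M}_\alpha(T_k)$ and therefore $\sum_k\mathbf{M}(T_k)\le c^{1-\alpha}\mathbf{M}_\alpha(\vec T)<\infty$.

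Next I would group the components by a next-fit packing. Write $m_k=\|\mu_k^-\|\in(0,c]$ and $M=\|\mu^-\|=\sum_k m_k$. Scanning $k=1,2,\dots$, I keep appending $T_k$ to the current group while the accumulated group mass stays $\le c$, and open a new group with $T_k$ otherwise. Each group $G_l$ then has mass $\sum_{k\in G_l}m_k\le c$, and each time a group is closed its mass plus the first item of the next group exceeds $c$, so any two consecutive groups have combined mass $>c$. Consequently there can be only finitely many groups: infinitely many would force $M$ to be infinite upon summing over consecutive pairs. Denoting the number of groups by $N$, the same pairing shows $N<2M/c+1=2\|\mu^-\|/c+1$, and I take $N(c):=N$; note that the last group simply absorbs the entire tail of $\vec T$, its mass remaining $\le c$ as a supremum of admissible partial sums.

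For each $l=1,\dots,N$ I set $T'_l:=\sum_{k\in G_l}T_k$. This series converges in mass, because $\sum_{k\in G_l}\mathbf{M}(T_k)<\infty$ by the reduction above, and its boundaries converge as well since $\mathbf{M}\big(\partial(T'_l-\sum_{k\in G_l,\,k\le n}T_k)\big)\le 2\sum_{k\in G_l,\,k>n}m_k\to 0$; hence $T'_l$ is a rectifiable $1$-current with $\partial T'_l=\mu^+_{G_l}-\mu^-_{G_l}$ and $\|\mu^-_{G_l}\|=\sum_{k\in G_l}m_k\le c$. Thus $\vec T'=(T'_1,\dots,T'_N)\in Path_c(\mu^-,\mu^+)$, and applying Lemma \ref{lem: M alpha cost triagle inequality} inside each group gives $\mathbf{M}_\alpha(\vec T')=\sum_{l}\mathbf{M}_\alpha(T'_l)\le\sum_{l}\sum_{k\in G_l}\mathbf{M}_\alpha(T_k)=\mathbf{M}_\alpha(\vec T)$, as required.

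I expect the genuine obstacle to be the convergence of the grouped currents $T'_l$, specifically for the group that contains the infinite tail of $\vec T$. Mere finiteness of the total source mass does not prevent $\sum_{k\in G_l}\mathbf{M}(T_k)$ from diverging (a component carrying tiny mass may still be arbitrarily long), so a bare tuple in $Path_c$ need not sum to a current at all. The optimal/acyclic reduction is precisely what removes this difficulty: it forces $\theta_k\le c$ and converts the assumed finiteness of the $\alpha$-cost into mass-summability, after which convergence, the boundary identification, and the capacity bound $\|\mu^-_{G_l}\|\le c$ all follow; the packing count and the cost inequality are then routine.
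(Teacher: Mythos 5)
Your argument is correct, and its skeleton is the same as the paper's: consolidate components into groups whose boundary masses stay below $c$, replace each group by the sum of its members, control the cost by the subadditivity of Lemma \ref{lem: M alpha cost triagle inequality}, and count groups by observing that suitably paired groups have combined mass exceeding $c$, which gives $N < 2\|\mu^-\|/c+1$. The organizational difference is minor: the paper first collapses the entire tail $\sum_{k\ge N}T_k$ (with tail mass $<c/2$) into one component and then repeatedly merges any two components of mass $\le c/2$, whereas you run a one-pass next-fit packing; the two counting devices are interchangeable. The substantive difference is your preliminary replacement of each $T_k$ by an $\mathbf{M}_\alpha$-optimal path, so that acyclicity and the good decomposition give $\theta_k\le\|\mu_k^-\|\le c$, hence $\sum_k\mathbf{M}(T_k)\le c^{1-\alpha}\mathbf{M}_\alpha(\vec T)<\infty$ and the grouped series converge in mass. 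This addresses a point the paper passes over silently: it writes $T'_N:=\sum_{k=N}^\infty T_k\in Path(\cdots)$ without verifying that the series converges as currents, and for a bare element of $Path_c(\mu^-,\mu^+)$ it need not, even when $\mathbf{M}_\alpha(\vec T)<\infty$. One quibble with your closing diagnosis, though: the failure mode is not a long curve of tiny density (any component with multiplicity $\theta_k\le c$ everywhere automatically satisfies $\mathbf{M}(T_k)\le c^{1-\alpha}\mathbf{M}_\alpha(T_k)$, so long thin curves are harmless); the danger comes from components that are not acyclic, e.g.\ ones containing cycles of very high multiplicity, which the capacity constraint on $\|\mu_k^\pm\|$ does not see and which can make $\mathbf{M}(T_k)$ non-summable while $\mathbf{M}_\alpha(T_k)$ stays summable. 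Your optimal-path replacement removes exactly these, so the fix is sound; its only cost is an appeal to the existence of optimal paths (the theorem of \cite{xia2003}, hence implicitly $\alpha\in(1-1/m,1]$, which is the regime where Theorem \ref{thm: existence of optimal transport paths} uses this lemma anyway), and your separate treatment of the vacuous case $\mathbf{M}_\alpha(\vec T)=\infty$ is also fine.
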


\begin{proof}
Since $\sum_{k=1}^{\infty} \| \mu_k^-\|=\sum_{k=1}^{\infty} \| \mu_k^+\| < \infty$, and $\|\mu_k^-\| = \|\mu_k^+\|$ for each $k$, there exists $N$ such that \[\sum_{k=N}^{\infty} \| \mu_k^-\|=\sum_{k=N}^{\infty} \| \mu_k^+\|< \frac{c}{2}.\] 
For any $\vec{T}=(T_1,T_2,\cdots,T_N,\cdots)\in Path_c(\mu^-,\mu^+)$, denote 
\[
T'_N := \sum_{k=N}^\infty T_k \in Path \left(\sum_{k=N}^\infty \mu_k^-, \sum_{k=N}^\infty\mu_k^+\right).
\] 
Then $\vec{T'} = (T_1,T_2,\cdots,T_{N-1}, T'_N) \in Path_c(\mu^-,\mu^+)$, and by Lemma \ref{lem: M alpha cost triagle inequality},
\begin{eqnarray*}
\mathbf{M}_\alpha(\vec{T'})
&=&  
\sum_{k=1}^{N-1} \mathbf{M}_\alpha (T_k)  + \mathbf{M}_\alpha(\vec{T}'_N)=  
\sum_{k=1}^{N-1} \mathbf{M}_\alpha (T_k)  + \mathbf{M}_\alpha\left(\sum_{k=N}^\infty T_k \right) \\
&\le & \sum_{k=1}^\infty \mathbf{M}_\alpha (T_k) =
\mathbf{M}_\alpha (\vec{T}).
\end{eqnarray*}
As a result, without loss of generality, we may assume that $\vec{T}$ has only finitely number of components, i.e. $\vec{T}=(T_1,T_2,\cdots,T_N)$.

We may further assume that there is at most one $k$ with $1\le k\le N$ satisfying $\|\mu_k^-\| \leq c/2$. 
Indeed, assume for some $1\le i<j\le N$, it holds that $\|\mu_i^-\| \le c/2$ and $\|\mu_j^-\| \le c/2 $. 
Let 
$$\vec{T}^* :=\left(T_1,\cdots, T_i +T_j, \cdots ,T_{j-1},T_{j+1},\cdots,T_N\right).$$
Then
$\vec{T}^* \in Path_c(\mu^-, \mu^+),$
since $\|\mu_i^++\mu_j^+\| = \|\mu_i^-+\mu_j^-\| = \|\mu_i^-\| + \|\mu_j^-\| \le c$. 
Also, 
\[\mathbf{M}_\alpha(\vec{T}^*)= 
\sum_{k \neq i,j}\mathbf{M}_\alpha(T_k) \ + \mathbf{M}_\alpha(T_i+T_j) \le 
\sum_{k \neq i,j}\mathbf{M}_\alpha(T_k) \ + \mathbf{M}_\alpha(T_i) + \mathbf{M}_\alpha(T_j)  =   \mathbf{M}_\alpha(\vec{T}).
\] 

Thus, replacing $\vec{T}$ by $\vec{T}^*$ if necessary, we may assume that there is at most one $k$, with $ 1\le k\le N$, satisfying $\|\mu_k^-\| \le c/2$. 
Hence,
\[\|\mu^-\|=\sum_{k=1}^N \|\mu_k^-\| > (N-1)\frac{c}{2},\]
which implies $N<  2\|\mu^-\|/c +1$. 
\end{proof}

The following theorem says that Problem \ref{problem: main} has a solution when $\alpha\in (1-1/m,1]$.
\begin{theorem} \label{thm: existence of optimal transport paths}
Let $\mu^-$ and $\mu^+$ be two Radon measures on $ \mathbb{R}^m$ with equal mass, supported on compact sets. 
For any $\alpha\in (1-1/m,1]$ and $c>0$, there exists a transport multi-path $\vec{T}^*=(T_1,T_2,\cdots, T_N) \in Path_c(\mu^-,\mu^+)$ of finite many components such that 
\[\mathbf{M}_\alpha (\vec{T}^*)\le \mathbf{M}_\alpha (\vec{T})\]
for all $\vec{T} \in Path_c(\mu^-,\mu^+)$. Moreover, the number of components $N$ of $\vec{T}^*$ is less than $ \frac{2}{c}\|\mu^-\| +1 $.
\end{theorem}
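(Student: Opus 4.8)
The plan is to establish existence via the direct method of the calculus of variations: take a minimizing sequence, reduce each element to a uniformly bounded number of components (already done in Lemma \ref{lemma: finite_n}), extract compactness on each component separately, and verify lower semicontinuity of the cost. Since Lemma \ref{lemma: finite_n} lets me replace any competitor by one with fewer than $2\|\mu^-\|/c + 1$ components without increasing the cost, I may assume every element of the minimizing sequence has exactly $N$ components for a fixed $N < 2\|\mu^-\|/c + 1$ (passing to a subsequence so the component count is constant, and padding with zero-mass components if needed is not allowed since masses must be positive, so instead I fix the largest occurring count and argue along that subsequence).

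First I would set $L := \inf\{\mathbf{M}_\alpha(\vec{T}) : \vec{T} \in Path_c(\mu^-,\mu^+)\}$, which is finite and nonnegative, and choose a minimizing sequence $\vec{T}^{(n)} = (T_1^{(n)}, \dots, T_N^{(n)})$ with $\mathbf{M}_\alpha(\vec{T}^{(n)}) \to L$. For each fixed $k$, the components $T_k^{(n)}$ are transport paths between measures $\mu_k^{-,(n)}, \mu_k^{+,(n)}$ supported in the compact convex hull $X$ of the supports, each of total mass at most $c$. The key compactness input is the standard result from ramified transport theory (as in \cite{xia2003}, \cite{book}) that for $\alpha \in (1-1/m, 1]$, the set of transport paths $T$ between measures supported in a fixed compact set, with $\mathbf{M}_\alpha(T)$ uniformly bounded and boundary masses bounded, is sequentially compact in the flat/weak topology, with $\mathbf{M}_\alpha$ lower semicontinuous. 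Using this, I would extract (by a diagonal argument over the finitely many indices $k=1,\dots,N$) a subsequence along which each $T_k^{(n)}$ converges to some rectifiable current $T_k^*$, each boundary $\mu_k^{\pm,(n)}$ converges weakly-$*$ to some $\mu_k^{\pm}$, and each $\mathbf{M}_\alpha(T_k^*) \le \liminf_n \mathbf{M}_\alpha(T_k^{(n)})$.

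Next I would verify that the limit $\vec{T}^* = (T_1^*, \dots, T_N^*)$ is admissible. Weak-$*$ convergence of boundaries gives $\partial T_k^* = \mu_k^+ - \mu_k^-$, so $T_k^* \in Path(\mu_k^-, \mu_k^+)$. The constraints $\sum_k \mu_k^{\pm,(n)} = \mu^{\pm}$ pass to the limit under weak-$*$ convergence since the sum is finite, giving $\sum_k \mu_k^{\pm} = \mu^{\pm}$. The capacity bound $\|\mu_k^{-,(n)}\| \le c$ passes to the limit by lower semicontinuity of mass under weak-$*$ convergence, yielding $\|\mu_k^-\| = \|\mu_k^+\| \le c$; the positivity $\|\mu_k^-\| > 0$ requires a small argument, and if some limit component has zero mass I simply discard it, which only decreases the component count and keeps admissibility. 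Thus $\vec{T}^* \in Path_c(\mu^-,\mu^+)$. Finally, summing the componentwise lower semicontinuity over the $N$ indices gives
\[
\mathbf{M}_\alpha(\vec{T}^*) = \sum_{k=1}^N \mathbf{M}_\alpha(T_k^*) \le \sum_{k=1}^N \liminf_n \mathbf{M}_\alpha(T_k^{(n)}) \le \liminf_n \sum_{k=1}^N \mathbf{M}_\alpha(T_k^{(n)}) = L,
\]
so $\vec{T}^*$ is a minimizer, and the bound on $N$ is inherited from Lemma \ref{lemma: finite_n}.

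**The main obstacle** I anticipate is handling the interaction between the weak-$*$ convergence of the boundary measures and the component-wise compactness: I must ensure the decomposition of the fixed target $\mu^+$ into pieces $\mu_k^{+,(n)}$ does not degenerate in a way that violates admissibility of the limit (for instance, mass of a component escaping to zero, or the convergent limits failing to reassemble to $\mu^{\pm}$). The positivity constraint $\|\mu_k^-\| > 0$ is the delicate point, since weak-$*$ limits can lose mass only through escape to infinity—which is precluded here by the common compact support—so in fact mass is conserved and the only way a component degenerates is by its mass tending to zero, handled by discarding. The finiteness of $N$, guaranteed by Lemma \ref{lemma: finite_n}, is exactly what makes this diagonal extraction and the limiting of the finite sums $\sum_k \mu_k^{\pm,(n)} = \mu^{\pm}$ legitimate; without the uniform component bound, one would face the genuine non-compactness illustrated in Example \ref{ex: aggregation}.
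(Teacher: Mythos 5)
Your skeleton --- minimizing sequence, reduction to $N < 2\|\mu^-\|/c+1$ components via Lemma \ref{lemma: finite_n}, componentwise extraction of limits, then summing lower semicontinuity over finitely many indices --- is the same as the paper's. But there is a genuine gap at your very first step: you declare $L := \inf\{\mathbf{M}_\alpha(\vec{T}) : \vec{T}\in Path_c(\mu^-,\mu^+)\}$ to be ``finite and nonnegative'' with no justification. Finiteness, and indeed even non-emptiness of $Path_c(\mu^-,\mu^+)$, is not free: one must produce a decomposition $\mu^\pm=\sum_k\mu_k^\pm$ into pieces of mass at most $c$ \emph{together with} rectifiable currents of finite $\mathbf{M}_\alpha$-cost joining each pair. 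The paper does exactly this: by the existence theorem of \cite{xia2003} (valid because $\alpha\in(1-1/m,1]$ and the supports are compact) there is $S\in Path(\mu^-,\mu^+)$ with $\mathbf{M}_\alpha(S)<\infty$; choosing $L\in\mathbb{N}$ with $\|\mu^-\|\le cL$, the multi-path $\vec{S}=(\frac{1}{L}S,\dots,\frac{1}{L}S)$ is admissible with cost $L^{1-\alpha}\mathbf{M}_\alpha(S)<\infty$. This splitting construction is the \emph{only} place where the hypothesis $\alpha>1-1/m$ enters; your proposal instead attaches that hypothesis to the compactness step, where it plays no role, and never supplies a finite-cost competitor at all.

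The second problem is the compactness input itself. As you state it --- transport paths in a fixed compact set with uniformly bounded $\mathbf{M}_\alpha$ and bounded boundary masses form a sequentially compact class --- it is not a correct off-the-shelf result for rectifiable $1$-currents as defined in this paper, because bounded $\mathbf{M}_\alpha$ does not bound the mass $\mathbf{M}$: circles of radius $r_n\to 0$ carrying density $\theta_n\to\infty$ with $\theta_n^\alpha r_n$ held constant have constant $\mathbf{M}_\alpha$, zero boundary, and $\mathbf{M}=2\pi\theta_n^{1-\alpha}\left(\theta_n^\alpha r_n\right)\to\infty$, so the weak-$*$/flat compactness theorems for currents (which all require uniform mass bounds) cannot be quoted directly. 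The repair --- and the point where the capacity constraint does real work --- is the paper's elementary estimate: taking representatives $T_k^n=\underline{\underline{\tau}}(M_{k,n},\theta_{k,n},\xi_{k,n})$ with $\theta_{k,n}\le\|\mu_k^-\|\le c$ (as one may along a minimizing sequence), one gets $\mathbf{M}(T_k^n)=\int\theta_{k,n}^\alpha\,\theta_{k,n}^{1-\alpha}\,d\mathcal{H}^1\le c^{1-\alpha}\mathbf{M}_\alpha(T_k^n)\le c^{1-\alpha}\mathbf{M}_\alpha(\vec{S})$, which is the uniform mass bound that legitimizes the componentwise extraction. With these two repairs, the remainder of your argument (admissibility of the limit, discarding null components, summing lower semicontinuity) matches the paper and goes through.
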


\begin{proof}
We first show that there exists a transport path $\vec{S} \in Path_c(\mu^-, \mu^+)$ satisfying (\ref{eqn: problem_1}) with $\mathbf{M}_\alpha(\vec{S})<\infty$. Indeed,
since both $\mu^-$ and $\mu^+$ are supported on a compact set, by the existence theorem in \cite{xia2003} we can find $S\in Path(\mu^-,\mu^+)$ with $\mathbf{M}_\alpha(S)<+\infty$ for $\alpha \in (1-1/m ,1]$. 

Pick $L\in \mathbb{N}$ large enough so that $\|\mu^+\| = \|\mu^-\| \le cL$. Then
$$\vec{S}:=\left(\frac{1}{L}S,\frac{1}{L}S,\cdots, \frac{1}{L}S \right) \in Path_c(\mu^-, \mu^+)$$ with $\mu_k^{\pm}=\frac{1}{L}\mu^{\pm}$ for $k=1,2,\cdots, L$.
Moreover, $$\mathbf{M}_{\alpha}(\vec{S})= \sum_{k=1}^L  \mathbf{M}_{\alpha}(S /L) = \sum_{k=1}^L L^{-\alpha} \mathbf{M}_{\alpha}(S) = L^{1-\alpha} \mathbf{M}_{\alpha}(S)<\infty.$$

Now, let $\{\vec{T}^{(n)}\}$ be any $\mathbf{M}_\alpha$-minimizing sequence in $Path_c(\mu^-, \mu^+)$ with $$\mathbf{M}_{\alpha}(\vec{T}^{(n)})\le \mathbf{M}_{\alpha}(\vec{S}).$$
By Lemma \ref{lemma: finite_n}, without loss of generality, we may assume that 
each $\vec{T}^{(n)}=\left(T_1^n, T_2^n,\cdots, T_N^n\right)$ with $N=N(c)< \frac{2\|\mu^-\|}{c} +1$. 
For each $1\le k \le N$ and $n\in \mathbb{N}$, let $T_k^n = \underline{\underline{\tau}}(M_{k,n},\theta_{k,n},\xi_{k,n})$ with $\theta_{k,n}(x) \le \|\mu_k^-\| \le c$, then
\begin{eqnarray*}
\mathbf{M}(T_k^n) 
&=& 
\int_{M_{k,n}} \theta_{k,n} \,\mathcal{H}^1(x) \\
&=&  
\int_{M_{k,n}} \theta_{k,n}^\alpha \cdot \theta_{k,n}^{1-\alpha} \,\mathcal{H}^1(x)  \le
c^{1-\alpha} \int_{M_{k,n}} \theta_{k,n}^\alpha  \,\mathcal{H}^1(x) = 
c^{1-\alpha} \mathbf{M_\alpha}(T_k^n).   
\end{eqnarray*}
Hence,
\[ 
\mathbf{M}(T_k^n)\le 
c^{1-\alpha} \mathbf{M}_\alpha(T_k^n)\le 
c^{1-\alpha} \mathbf{M}_\alpha(\vec{T}^{(n)})\le 
c^{1-\alpha} \mathbf{M}_{\alpha}(\vec{S})<\infty.
\]

By the compactness of rectifiable currents, each sequence $$\{T_k^n\}_{n=1}^\infty$$ sub-sequentially converges to some rectifiable current $T_k$ for $k=1,2,\cdots, N$. 
Since $N$ is finite, we may assume they have the same convergent subsequence. As a result, we have a convergent subsequence of $\{\vec{T}^{(n)}=\left(T_1^n, T_2^n,\cdots, T_N^n\right)\}$ with limit 
$\vec{T}^*=(T_1, T_2, \cdots, T_ N)$. It is easy to see that $\vec{T}^*\in Path_c(\mu^-, \mu^+)$. Also, by the lower-semicontinuity of $\mathbf{M}_{\alpha}$-mass, this multi-path $\vec{T}^*$ is the desired solution for Problem \ref{problem: main}. 
\end{proof}

\section{Properties of optimal transport multi-paths}
This section aims to study some non-trivial properties of optimal transport multi-paths. To do so, we first introduce some notations.

Given a transport multi-path $(T_1,T_2,\cdots,T_N) \in Path_c(\mu^-,\mu^+)$, it follows that each component $T_k \in Path(\mu_k^-,\mu_k^+)$ with $\partial T_k = \mu_k^+ - \mu_k^-$.
Denote 
$\partial^{-} T_k := \mu_k^-$ as the source measure and denote $ \partial^{+} T_k := \mu_k^+$ as the target measure,
then by definition of transport path we have $\|\partial^-T_k\| = \|\partial^+T_k\|$. 
The conditions for transport multi-paths in $Path_c(\mu^-,\mu^+)$ can be expressed as 
$$\mu^- = \sum_{k=1}^N \partial^-T_k, \ \mu^+ = \sum_{k=1}^N \partial^+T_k, \ \|\partial^-T_k\| = \|\partial^+T_k\|  \le c.$$

We now introduce a technical result that will be used later to determine whether a transport multi-path is optimal or not.
\begin{theorem}
\label{eq:thm_perturbation_capacity}
Given $\mu^- = \sum_{k=1}^N \mu_k^-, \mu^+ = \sum_{k=1}^N \mu_k^+, \|\mu_k^-\|=\|\mu_k^+\| \le c,$ and a transport multi-path $$\vec{T}=(T_1,T_2,\cdots,T_N)\in Path_c(\mu^-,\mu^+).$$
Suppose $\vec{S} = (S_1,S_2,\cdots,S_N)$ is a list of rectifiable 1-currents such that for each $k=1,2,\cdots, N$, its component $S_k$ satisfies the following conditions:
\begin{itemize}
\item [(1)] $S_k$ is on $T_k$ in the sense of Definition \ref{def: cycle-free current}(a);

\item [(2)] In the sense of signed measures, $\partial S_k = \rho_k(x) \partial T_k$ with
$| \rho_k(x) | \le 1$ and 
\begin{equation}
\label{eqn: balance_S_T}
\sum_{k=1}^N \rho_k(x) \partial^-T_k = 0,\ \sum_{k=1}^N \rho_k(x) \partial^+T_k = 0; 
\end{equation}

\item [(3)] $\|\partial^-(T_k \pm S_k)\|\le c$ and $\|\partial^+(T_k \pm S_k)\| \le c$.
\end{itemize}
Then for any $\epsilon \in [-1,1]$, $$\vec{T} + \epsilon\vec{S} := (T_1+\epsilon S_1,T_2+\epsilon S_2,\cdots, T_N+\epsilon S_N)\in Path_c(\mu^-,\mu^+),$$ and for $\alpha \in [0,1]$,
\begin{equation}
\label{eqn: m_alpha_comp}
    \min \left\{
\mathbf{M}_\alpha(\vec{T} + \vec{S}),
\mathbf{M}_\alpha(\vec{T} - \vec{S}) \right\}
\le \mathbf{M}_\alpha(\vec{T}).
\end{equation}
Moreover, if $\vec{T}$ is $\mathbf{M}_{\alpha}$-optimal for some $\alpha \in (0,1)$, 
then $\vec{S}=\vec{0}$ in the sense that $S_k=0$ for all $k=1,2,\cdots, N$.
\end{theorem}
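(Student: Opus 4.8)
The plan is to handle the three assertions in order, with the whole argument driven by the concavity of $t\mapsto t^\alpha$ on $[0,\infty)$ for $\alpha\in[0,1]$ and its \emph{strict} concavity for $\alpha\in(0,1)$.

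For the admissibility claim I would compute the boundary of each perturbed component directly. Since $S_k$ is on $T_k$, I write $T_k=\underline{\underline{\tau}}(M_k,\theta_k,\xi_k)$ and $S_k=\underline{\underline{\tau}}(M_k,\phi_k,\xi_k)$ with $0\le\phi_k\le\theta_k$ sharing the orientation of $T_k$ (extending $\phi_k$ by $0$ off its carrier), so that $T_k+\epsilon S_k=\underline{\underline{\tau}}(M_k,\theta_k+\epsilon\phi_k,\xi_k)$ has nonnegative multiplicity for $\epsilon\in[-1,1]$. Using $\partial S_k=\rho_k\partial T_k$ with $\partial T_k=\mu_k^+-\mu_k^-$, I get $\partial(T_k+\epsilon S_k)=(1+\epsilon\rho_k)\mu_k^+-(1+\epsilon\rho_k)\mu_k^-$, and since $|\epsilon\rho_k|\le 1$ the two measures $(1+\epsilon\rho_k)\mu_k^\pm$ are nonnegative; I designate them as $\partial^\pm(T_k+\epsilon S_k)$. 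Their masses are affine in $\epsilon$, hence maximized over $[-1,1]$ at $\epsilon=\pm1$, so hypothesis (3) yields $\|\partial^\pm(T_k+\epsilon S_k)\|\le c$ for all $\epsilon\in[-1,1]$; the identity $\|\partial^-(T_k+\epsilon S_k)\|=\|\partial^+(T_k+\epsilon S_k)\|$ follows from $\|\mu_k^-\|=\|\mu_k^+\|$ together with $\int\rho_k\,d\mu_k^-=\int\rho_k\,d\mu_k^+$ (which holds because $\partial S_k$ has zero total mass). Summing over $k$ and invoking the balance equations (\ref{eqn: balance_S_T}) gives $\sum_k\partial^\pm(T_k+\epsilon S_k)=\mu^\pm+\epsilon\sum_k\rho_k\mu_k^\pm=\mu^\pm$, placing $\vec T+\epsilon\vec S$ in $Path_c(\mu^-,\mu^+)$.

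For the cost inequality I would introduce the scalar function $f(\epsilon):=\mathbf{M}_\alpha(\vec T+\epsilon\vec S)=\sum_k\int_{M_k}(\theta_k+\epsilon\phi_k)^\alpha\,d\mathcal H^1$ on $[-1,1]$. For each fixed $x$ the map $\epsilon\mapsto(\theta_k(x)+\epsilon\phi_k(x))^\alpha$ is concave, being the composition of an affine map with the concave function $t\mapsto t^\alpha$; integrating and summing preserves concavity, so $f$ is concave. Concavity then gives $f(0)\ge\tfrac12\big(f(1)+f(-1)\big)\ge\min\{f(1),f(-1)\}$, which is exactly (\ref{eqn: m_alpha_comp}). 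For the rigidity statement, assume $\vec T$ is $\mathbf{M}_\alpha$-optimal with $\alpha\in(0,1)$. Since $\vec T\pm\vec S\in Path_c(\mu^-,\mu^+)$ by the first part, optimality forces $f(0)\le f(1)$ and $f(0)\le f(-1)$, hence $f(0)\le\tfrac12(f(1)+f(-1))$; combined with the concavity bound this gives the equality $f(0)=\tfrac12(f(1)+f(-1))$. The key point is the pointwise \emph{strict} concavity: for $\alpha\in(0,1)$ one checks $(\theta_k(x))^\alpha>\tfrac12\big((\theta_k(x)+\phi_k(x))^\alpha+(\theta_k(x)-\phi_k(x))^\alpha\big)$ whenever $\phi_k(x)>0$ (the degenerate case $\phi_k=\theta_k$ being handled separately using $2^{\alpha-1}<1$). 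Thus the integrand of $f(0)-\tfrac12(f(1)+f(-1))$ is nonnegative and strictly positive precisely where some $\phi_k>0$, so the equality forces $\phi_k=0$ $\mathcal H^1$-a.e. for every $k$, i.e. $\vec S=\vec 0$.

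I expect the main obstacle to be the bookkeeping in the admissibility step — in particular justifying that $(1+\epsilon\rho_k)\mu_k^\pm$ is the correct nonnegative source/target decomposition of $\partial(T_k+\epsilon S_k)$, and that the orientation in ``$S_k$ on $T_k$'' is that of $T_k$, so that the cost of $T_k\pm S_k$ genuinely equals $\int(\theta_k\pm\phi_k)^\alpha\,d\mathcal H^1$. Once the perturbed components are set up correctly, the concavity argument for (\ref{eqn: m_alpha_comp}) and the strict-concavity argument for $\vec S=\vec 0$ are routine.
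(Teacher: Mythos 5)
Your proposal follows essentially the same route as the paper: both reduce the theorem to the concavity of $t\mapsto t^{\alpha}$ along the segment $\epsilon\mapsto \mathbf{M}_{\alpha}(\vec T+\epsilon\vec S)$, and both extract $\vec S=\vec 0$ from strict concavity when $\alpha\in(0,1)$. The admissibility bookkeeping ($\partial(T_k+\epsilon S_k)=(1+\epsilon\rho_k)\partial T_k$, nonnegativity of $1+\epsilon\rho_k$, affinity of the boundary masses in $\epsilon$, and the balance equations) matches the paper's almost line by line. The cosmetic difference is that you use the midpoint inequality $f(0)\ge\tfrac12\bigl(f(1)+f(-1)\bigr)$ plus pointwise strict concavity, whereas the paper differentiates twice under the integral sign to get $\frac{d^2}{d\epsilon^2}\mathbf{M}_\alpha(T_k+\epsilon S_k)\le 0$ and a strictly negative second derivative at $\epsilon=0$ when $S_k\neq 0$; your variant is if anything slightly more robust, since it requires no justification of differentiation under the integral (the integrand $(\theta_k+\epsilon\phi_k)^{\alpha-2}\phi_k^2$ can degenerate at $\epsilon=\pm1$ where $\phi_k=\theta_k$).

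The one genuine (though easily repaired) gap is the point you flagged yourself: Definition \ref{def: cycle-free current}(a) only requires $\mathcal{H}^1(N_k\setminus M_k)=0$ and $\phi_k\le\theta_k$; it does \emph{not} entitle you to write $S_k=\underline{\underline{\tau}}(M_k,\phi_k,\xi_k)$ with the orientation of $T_k$. An $S_k$ on $T_k$ may run against $T_k$'s orientation on part of its carrier, and indeed the $S_k$'s constructed later in the paper (differences of curve integrals from good decompositions) are of this mixed type. The paper resolves this by observing that $\zeta_k(x)=\pm\xi_k(x)$ for $\mathcal{H}^1$-a.e.\ $x\in N_k$ (two rectifiable sets share tangents a.e.\ on their intersection), and carrying the factor $\langle\xi_k(x),\zeta_k(x)\rangle=\pm1$ through: relative to $\xi_k$, the multiplicity of $T_k+\epsilon S_k$ is $\theta_k+\epsilon\,\phi_k\langle\xi_k,\zeta_k\rangle$, which still lies in $[0,2\theta_k]$ for $|\epsilon|\le1$. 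With $\phi_k$ replaced everywhere by the signed density $\phi_k\langle\xi_k,\zeta_k\rangle$ (whose absolute value is $\phi_k$ and whose square is $\phi_k^2$), your concavity and strict-concavity arguments go through verbatim, and the proof is complete.
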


\begin{proof}
For each $k$, since $$\partial (T_k + \epsilon S_k) = \partial T_k + \epsilon\partial S_k = (1+\epsilon \rho_k(x))\partial T_k$$ and $1+\epsilon \rho_k(x)\ge 0$ because $|\epsilon|\le 1$ and $|\rho_k(x)|\le 1$, it follows that 
$$
\partial^-(T_k+\epsilon S_k) = (1+\epsilon \rho_k(x)) \partial^- T_k,\ \mathrm{ and }\ 
\partial^+(T_k+\epsilon S_k) = (1+\epsilon \rho_k(x)) \partial^+ T_k.
$$
By (\ref{eqn: balance_S_T}), this implies that
\begin{eqnarray*}
\sum_{k=1}^N \partial^- (T_k +\epsilon  S_k) &=& 
\sum_{k=1}^N (1+\epsilon \rho_k(x)) \partial^-T_k \\
&=&
\sum_{k=1}^N \partial^-T_k + \epsilon \sum_{k=1}^N \rho_k(x) \partial^-T_k =
\sum_{k=1}^N \partial^-T_k  = 
\mu^-,
\end{eqnarray*}
and similarly
\begin{eqnarray*}
\sum_{k=1}^N \partial^+ (T_k +\epsilon  S_k) &=& 
\sum_{k=1}^N (1+\epsilon \rho_k(x)) \partial^+T_k \\
&=&
\sum_{k=1}^N \partial^+T_k + \epsilon \sum_{k=1}^N \rho_k(x) \partial^+T_k =
\sum_{k=1}^N \partial^+T_k  = 
\mu^+ .
\end{eqnarray*}
Also, since 
$
\| \partial^-(T_k +\epsilon S_k)\| 
=
\int_{X} 1+\epsilon\rho_k(x) \,d(\partial^-T_k)$ 
is linear in $\epsilon \in [-1,1]$, it follows that
$$\|\partial^-(T_k+\epsilon S_k)\|\leq \max\left\{\|\partial^-(T_k+S_k)\|, \|\partial^-(T_k-S_k)\|\right\} 
\le
c.$$
Similarly, $\|\partial^+(T_k+\epsilon S_k)\| \le c$. As a result, $\vec{T}+\epsilon\vec{S} \in  Path_c(\mu^-,\mu^+)$ for each $\epsilon\in [-1,1]$.

To show (\ref{eqn: m_alpha_comp}), without loss of the generality, we may assume that $\mathbf{M}_{\alpha}(\vec{T})<\infty$, i.e., $\mathbf{M}_{\alpha}(T_k)<\infty$ for each $k$.
For each $k=1,2,\cdots,N$, denote $T_k=\underline{\underline{\tau}}(M_k,\theta_k,\xi_k)$ and $S_k=\underline{\underline{\tau}}(N_k,\phi_k,\zeta_k)$. By definition, $S_k$ is on $T_k$ means that $\mathcal{H}^1(N_k\setminus M_k)=0$, and $\phi_k(x)\le \theta_k(x)$ for $\mathcal{H}^1$ almost every $ x\in N_k$. Note that  $\xi_k(x) = \pm \zeta_k (x)$ for
$\mathcal{H}^1$ almost every $ x\in N_k$, since two rectifiable sets have the same tangent almost everywhere on their intersection. 
Thus, it follows that
\[\mathbf{M}_\alpha (T_k+\epsilon S_k)=
\int_{M_k} |\theta_k(x) + \epsilon \phi_k(x) \langle \xi_k(x), \zeta_k(x)\rangle |^\alpha d\mathcal{H}^1(x),\]
where we use the convention that $\phi_k(x)=0, \zeta_k(x)=\xi_k(x)$ when $x\in M_k\setminus N_k$ and $\langle \xi_k(x), \zeta_k(x)\rangle=\pm 1$ for a.e. $x\in N_k\cap M_k$.
Note also that when $\epsilon \in [-1,1]$, since $\phi_k(x)\le \theta_k(x)$, we have $ \theta_k(x) + \epsilon \phi_k(x) \langle \xi_k(x),   \zeta_k(x)\rangle\ge 0$, and hence
\[\mathbf{M}_\alpha (T_k+\epsilon S_k)=
\int_{M_k} \left(\theta_k(x) + \epsilon \phi_k(x) \langle \xi_k(x), \zeta_k(x)\rangle \right)^\alpha d\mathcal{H}^1(x).\]
Since $\mathbf{M}_\alpha (T_k)=\int_{M_k} \theta_k(x)^\alpha d\mathcal{H}^1(x)<\infty$ and $\phi_k(x)\le \theta_k(x)$, one may take derivatives directly and get
\[\frac{d^2}{d\epsilon^2}\mathbf{M}_\alpha (T_k+\epsilon S_k)=
\alpha(\alpha-1) 
\int_{M_k}\left(\theta_k(x) + \epsilon \phi_k(x) \langle \xi_k(x), \zeta_k(x)\rangle \right)^{\alpha-2} \phi_k(x)^2\le 0.
\]
This implies $\mathbf{M}_\alpha (\vec{T}+ \epsilon \vec{S})=\sum_{k=1}^N \mathbf{M}_\alpha (T_k+\epsilon S_k) $ is a concave function of $\epsilon$ and hence it reaches minimum value at one of its endpoint $\epsilon=\pm 1$. 
As a result, $\min  \{ \mathbf{M}_\alpha(\vec{T}+\vec{S}), \mathbf{M}_\alpha(\vec{T}-\vec{S})  \} 
\le \mathbf{M}_\alpha(\vec{T})$ as desired.

Now assume that $\vec{T}$ is $\mathbf{M}_{\alpha}$-optimal for some $\alpha\in (0,1)$ but $\vec{S} = (S_1,S_2,\cdots,S_N)$ is non-zero. i.e. there exists $k \in \{1,2,\cdots,N\}$ such that $S_{k}$ is a non-vanishing current.  
Then,
\[\left.\frac{d^2}{d\epsilon^2} \right\vert_{\epsilon=0}\mathbf{M}_\alpha (T_k+\epsilon S_k)=
\alpha(\alpha-1) 
\int_{M_k} \theta_k(x)^{\alpha-2} \phi_k(x)^2<0
\]
due to the facts 
\[\int_{M_k} \theta_k(x)^{\alpha-2} \phi_k(x)^2= \int_{N_k} \theta_k(x)^{\alpha-2} \phi_k(x)^2\ge \int_{N_k} \phi_k(x)^{\alpha-2} \phi_k(x)^2=\mathbf{M}_{\alpha}(S_k)>0,\]
and $\alpha\in (0,1)$. This implies that $\mathbf{M}_\alpha(\vec{T}+ \epsilon \vec{S})$  cannot achieve a local minimum at $\epsilon=0$, contradicting with $\vec{T}$ is $\mathbf{M}_\alpha$-optimal.
\end{proof}

\begin{remark} 
Note that an obvious necessary condition for $\vec{S}=(S_1,S_2,\cdots, S_N)$ is given by condition (2):
$$\partial \left( \sum_{k=1}^N S_k \right)=
\sum_{k=1}^N \partial S_k  =
\sum_{k=1}^N \rho_k(x) \partial T_k =
\sum_{k=1}^N \rho_k(x) \partial^+ T_k -\sum_{k=1}^N \rho_k(x) \partial T_k^- =0.$$
In general, a component $\partial S_k$ of $\vec{S}$ does not necessarily vanish. Nevertheless, when each $\partial S_k =0$, conditions (2) and (3) are automatically satisfied, since we have $\rho_k(x)=0$ and $\partial(T_k \pm S_k) = \partial T_k$.
\end{remark}

We now use Theorem \ref{eq:thm_perturbation_capacity} to study properties of optimal transport multi-paths between atomic measures. In the following content, let
\begin{equation}\label{eq: capacity atomic measures}  
\mu^- =\sum_{i=1}^{N_1} m'_i \delta_{x_i} \text{ and } 
\mu^+ = \sum_{j=1}^{N_2} m_j \delta_{y_j} \text{ with } \sum_{i=1}^{N_1} m'_i=\sum_{j=1}^{N_2} m_j.
\end{equation}
By Theorem \ref{thm: existence of optimal transport paths}, there exists an $\mathbf{M}_{\alpha}$-minimizer $\vec{T}^*=(T_1,T_2,\cdots, T_N)$ among all transport multi-paths $\vec{T}\in Path_c(\mu^-, \mu^+)$. Here, each component $T_k$ is an optimal transport path from $\mu_k^-$ to $\mu^+_k$, and $\sum_{k=1}^N\mu_k^{\pm}=\mu^{\pm}$. 

Besides knowing $T_k$ satisfies some basic properties of an optimal transport path, we are interested in finding some non-trivial properties of $T_k$. Our first result in this direction is about the map-compatibility.

\begin{theorem}
\label{thm: X_j's mutually disjoint}
Let $\mu^-$ and $\mu^+$ be defined as in equation (\ref{eq: capacity atomic measures}), and
$\alpha \in (0,1)$. 
Let $\vec{T}^*= (T_1,T_2,\cdots,T_N) $ be an $\mathbf{M}_{\alpha}$-minimizer in $Path_c(\mu^-, \mu^+)$. Then except for at most $N_2-1$ many $k$'s,
each component $T_k$ of $\vec{T}^*$ is map-compatible.
\end{theorem}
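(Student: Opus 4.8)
The plan is to argue by contradiction using the rigidity conclusion of Theorem \ref{eq:thm_perturbation_capacity}: assuming that strictly more than $N_2-1$ components of $\vec{T}^*$ fail to be map-compatible, I will exhibit a nonzero admissible perturbation $\vec{S}$, which contradicts optimality. First I would recast map-compatibility combinatorially. Each component $T_k$ is an optimal transport path, hence cycle-free, so by Proposition \ref{prop: single component less than 1 point} I may fix a good decomposition $\eta_k$ with $|X_{j_1}(\eta_k)\cap X_{j_2}(\eta_k)|\le 1$ for all pairs. Call $T_k$ \emph{bad} if this $\eta_k$ still satisfies $X_{j_1}(\eta_k)\cap X_{j_2}(\eta_k)\neq\emptyset$ for some $j_1<j_2$; otherwise the sets $X_j(\eta_k)$ are pairwise disjoint and part (2) of Proposition \ref{prop: X_1 intersect X_2 = empty gives transport map} makes $T_k$ map-compatible. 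Thus it suffices to bound the number of bad components by $N_2-1$. For each bad component I record one witnessing source $x_{i_k}$ together with the (unique, by the $\le 1$ bound) pair of distinct targets $y_j,y_{j'}$ it feeds, i.e.\ with $\eta_k(\Gamma_{x_{i_k},y_j})>0$ and $\eta_k(\Gamma_{x_{i_k},y_{j'}})>0$.

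Next comes the counting. I would form a multigraph $\mathcal{G}$ on the $N_2$ target vertices $\{y_1,\dots,y_{N_2}\}$, adding for each bad component $T_k$ a single edge joining its witnessing pair $\{y_j,y_{j'}\}$ and labeled by $k$. If there were at least $N_2$ bad components, then $\mathcal{G}$ would have at least $N_2$ edges on $N_2$ vertices and so could not be a forest, hence it would contain a cycle $y_{j_1}\,y_{j_2}\cdots y_{j_\ell}\,y_{j_1}$ with $\ell\ge 2$. Since each component contributes exactly one edge, the labels $k_1,\dots,k_\ell$ of the cycle edges are distinct, and each $k_t$ carries a source $x_{i_t}$ feeding both $y_{j_t}$ and $y_{j_{t+1}}$ (indices mod $\ell$).

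Then I would build the perturbation. For each $t$ let $R^t_j:=\int_{\Gamma_{x_{i_t},y_j}}I_\gamma\,d\eta_{k_t}$ be the route-current of $T_{k_t}$ from $x_{i_t}$ to $y_j$; it is on $T_{k_t}$ and $\partial R^t_j=a_{t,j}(\delta_{y_j}-\delta_{x_{i_t}})$ with $a_{t,j}:=\eta_{k_t}(\Gamma_{x_{i_t},y_j})>0$. Set $S_k:=0$ off the cycle and
\[
S_{k_t}:=\epsilon\left(\frac{R^t_{j_t}}{a_{t,j_t}}-\frac{R^t_{j_{t+1}}}{a_{t,j_{t+1}}}\right).
\]
A direct computation gives $\partial S_{k_t}=\epsilon(\delta_{y_{j_t}}-\delta_{y_{j_{t+1}}})$, so the source terms cancel and, telescoping around the cycle, $\sum_k\partial S_k=0$ with every target's net change vanishing. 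Writing $\partial S_{k_t}=\rho_{k_t}\,\partial T_{k_t}$ with $\rho_{k_t}$ supported on $\{y_{j_t},y_{j_{t+1}}\}$, the balance identities in condition (2) of Theorem \ref{eq:thm_perturbation_capacity} hold; moreover each component's total source and target masses are unchanged because mass is only shifted between $y_{j_t}$ and $y_{j_{t+1}}$ inside $T_{k_t}$, so the capacity bounds in condition (3) persist. Choosing $\epsilon>0$ below all the finitely many involved masses $a_{t,j}$ and the target masses of the $T_{k_t}$ forces $|\rho_{k_t}|\le 1$, and since the normalized routes $R^t_j/a_{t,j}$ have multiplicity at most $1$ it also makes each $S_{k_t}$ on $T_{k_t}$ in the sense of Definition \ref{def: cycle-free current}(a). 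As $R^t_{j_t}$ and $R^t_{j_{t+1}}$ have different boundaries, $\vec{S}\neq\vec{0}$, contradicting the conclusion of Theorem \ref{eq:thm_perturbation_capacity} that optimality of $\vec{T}^*$ forces $\vec{S}=\vec{0}$. Hence there are at most $N_2-1$ bad components, proving the claim.

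I expect the main obstacle to be the current-theoretic bookkeeping in the perturbation step: verifying that the signed combination $R^t_{j_t}/a_{t,j_t}-R^t_{j_{t+1}}/a_{t,j_{t+1}}$, whose two summands overlap along the initial segment shared by the routes leaving $x_{i_t}$, is genuinely on $T_{k_t}$ with multiplicity dominated by $\theta_{k_t}$, and that one small $\epsilon$ can satisfy conditions (1)--(3) simultaneously along the whole cycle. The combinatorial pigeonhole producing the cycle and the telescoping cancellation in $\sum_k\partial S_k$ are by comparison routine.
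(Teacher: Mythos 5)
Your proposal is correct, and its skeleton coincides with the paper's proof: both reduce the claim, via Proposition \ref{prop: X_1 intersect X_2 = empty gives transport map}(2), to showing that at most $N_2-1$ components can have $X_{j_1}(\eta_k)\cap X_{j_2}(\eta_k)\neq\emptyset$ for some pair $j_1<j_2$; both then suppose there are $N_2$ such ``bad'' components, attach to each one the same mass-shifting current (a signed combination of the two normalized route-currents issuing from the common source), and invoke the rigidity part of Theorem \ref{eq:thm_perturbation_capacity} to conclude $\vec S=\vec 0$, contradicting the construction. The one genuine point of divergence is how the cancellation coefficients are produced. The paper forms the $N_2\times N_2$ matrix $M$ with rows $e_{j'_\ell}-e_{j_\ell}$, notes $M\mathbf{1}=0$ so that $\mathrm{rank}(M)<N_2$, and takes a nonzero left null vector $[c_1,\dots,c_{N_2}]$ (rescaled to respect the masses) as the perturbation amounts; the identity $\sum_\ell c_\ell(\delta_{y_{j'_\ell}}-\delta_{y_{j_\ell}})=0$ is exactly your target-balance condition. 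You instead extract a cycle from the multigraph on the targets and run $\pm\epsilon$ around it; such a circulation is precisely a left null vector of $M$ of minimal support, so the two mechanisms are equivalent in content. Yours is more constructive and makes the verification of $|\rho_k|\le 1$ and condition (3) cleaner, since only the cycle components are perturbed and all by the same amount $\epsilon$; the paper's rank observation is shorter and dispenses with the forest/cycle dichotomy while accommodating unequal coefficients in one stroke. Two minor remarks: the parenthetical claim that a bad component determines a \emph{unique} pair of targets is neither needed nor quite accurate (it may witness several overlapping pairs; you only record one), and the technical point you flag --- that $R^t_{j_t}/a_{t,j_t}-R^t_{j_{t+1}}/a_{t,j_{t+1}}$ is on $T_{k_t}$ --- is exactly what property (c) of good decompositions, i.e.\ inequality (\ref{eqn: tilde_theta_x}), delivers once $\epsilon$ is below the route masses, because the two routes inherit the orientation of $T_{k_t}$ wherever they overlap; the paper relies on the same fact without further elaboration.
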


\begin{proof}
For each $k=1,2,\cdots, N$, since $T_k$ is an optimal transport path, it is an acyclic normal 1-current. Thus, it has a good decomposition $\eta_k$.  Using similar notation as in equation (\ref{eqn: X_j}), we denote 
\begin{equation}
\label{eqn: X_j-defn}
    X_j(\eta_k) := \{x_i\in X : \eta_k(\Gamma_{x_i,y_j})>0\}.
\end{equation}
To prove the theorem, by Proposition \ref{prop: X_1 intersect X_2 = empty gives transport map}, it is sufficient to prove the collection of sets
$$\{X_j (\eta_k) : j=1,2,\cdots, N_2 \} $$
are mutually disjoint, except for at most $N_2-1$ many $k$'s. For the sake of contradiction, we assume there are $N_2$ collections of sets 
$$\left\{ X_j(\eta_{k_\ell}) \,:\, j=1,2,\cdots, N_2 \right\},\text{ for } \ell=1,2,\cdots,N_2, $$
that are not mutually disjoint.  
Thus for each $\ell$, there exist $x_{i_\ell},y_{j_\ell},y_{j'_\ell},$ such that 
$x_{i_\ell} \in X_{j_\ell}(\eta_{k_\ell}) \cap X_{j'_\ell}(\eta_{k_\ell}).$  That is,
\begin{equation}
\label{eqn: eta_positive}
    \eta_{k_\ell}(\Gamma_{x_{i_\ell},y_{j_\ell}}) >0 \text{ and }\eta_{k_\ell}(\Gamma_{x_{i_\ell},y_{j'_\ell}}) >0.
\end{equation}
In the following, to get a contradiction with the optimality of $\vec{T}^*$, we will build a non-vanishing $\vec{S}$ satisfying the conditions in Theorem \ref{eq:thm_perturbation_capacity}.

Let $M$ be the $N_2\times N_2$ matrix
$$M:=\begin{bmatrix}
e_{j_1'} - e_{j_1}\\
e_{j_2'} - e_{j_2}\\
\vdots \\
e_{j_{N_2}'} - e_{j_{N_2}}\\
\end{bmatrix},$$
where for each $k=1,2\cdots, N_2$,  $e_{k}$ denotes the row vector $ (0,\cdots,1,\cdots,0)$ in $\mathbb{R}^{N_2}$ with the nonzero number $1$ at its $k$-th position. Since 
$$
M
\begin{bmatrix}
1\\
1\\
\vdots \\
1\\
\end{bmatrix}
=
\begin{bmatrix}
0\\
0\\
\vdots \\
0\\
\end{bmatrix}, $$
it follows that $rank\left(
M\right) <N_2$. 
Thus, there exists a nonzero row vector $$[c_1,c_2,\cdots,c_{N_2}] \in\mathbb{R}^{N_2}$$ such that
\begin{equation}
\label{eq: define c_l small}
\begin{bmatrix}
c_1,c_2,\cdots,c_{N_2}
\end{bmatrix}
M
=
\begin{bmatrix}
0,0,\cdots,0
\end{bmatrix}.
\end{equation}
By (\ref {eqn: eta_positive}) and (\ref{eq: define c_l small}), we may further assume that 
\begin{equation}
\label{eqn: c_bounds}
    0 < \max\{ |c_\ell| : 1\le \ell\le N_2\} \le \min\{ \eta_{k_\ell}(\Gamma_{x_{i_\ell},y_{j_\ell}}), \ \eta_{k_\ell}(\Gamma_{x_{i_\ell},y_{j'_\ell}}) : 1\le \ell\le N_2\}.
\end{equation}

Since $\eta_k$ is a good decomposition of $T_k\in Path(\mu_k^-,\mu_k^+)$, it follows that
\[T_k = \int_{\Gamma} I_\gamma  d\eta_k=\sum_{i=1}^{N_1} \sum_{j=1}^{N_2} \int_{\Gamma_{x_i,y_j}} I_\gamma  d\eta_k.\]

Let 
$$S_{k_\ell}:=  -\frac{c_{\ell}}{\eta_{k_\ell} ( \Gamma_{x_{i_\ell},y_{j_\ell} }) } 
\int_{\Gamma_{x_{i_\ell},y_{j_\ell} } }  I_\gamma d \eta_{k_\ell} + 
\frac{c_{\ell}}{\eta_{k_\ell} ( \Gamma_{x_{i_\ell},y_{j'_\ell} }) } 
\int_{\Gamma_{x_{i_\ell},y_{j'_\ell} } }  I_\gamma d \eta_{k_\ell},$$
for $\ell= 1,2,\cdots,N_2$, and $S_k := 0$ for any other $k$'s. 
We now check that $\vec{S}=(S_1,S_2,\cdots, S_N)$ satisfies the conditions of Theorem \ref{eq:thm_perturbation_capacity}. By (\ref{eqn: c_bounds}), each $S_k$ is on $T_k$.
Since 
\begin{equation}
\label{eqn boundary_S_k}
    \partial {S}_{k_\ell} =c_\ell \delta_{y_{j'_\ell}} - c_\ell \delta_{y_{j_\ell}}, \text{ for } \ell=1,2,\cdots, N_2, \text{ and } \partial S_k = 0 \text{ for any other } k,
\end{equation}
 it follows that $\partial S_k = \rho_k(x) \partial T_k$, where $\rho_k(x)$ is defined as 
\begin{equation}
\label{eqn: rho_k}
\rho_{k_\ell}(x) = \left\{ 
\begin{array}{cl}
-c_\ell/\sum_{i} \eta_{k_\ell}(\Gamma_{x_i,y_{j_\ell}})    & \text{ if } x=y_{j_\ell} \\
c_\ell/\sum_{i}\eta_{k_\ell}(\Gamma_{x_i,y_{j'_\ell}})     & \text{ if } x=y_{j_\ell'} \\
0  & \text{ otherwise,}
\end{array}\right.
\end{equation}
for $\ell=1,2,\cdots,N_2$, and $\rho_k(x) =0$ for any other $k$'s. Therefore, we have $|\rho_k(x)| \le 1$, for all $k$ and all $x$.
Moreover, since $\partial^- T_k (\{x\})=0$ for $x \not\in \{x_1,x_2,\cdots,x_{N_1}\}$ and $\rho_k(x)=0$ for $x\in \{x_1,x_2,\cdots,x_{N_1}\}$, by (\ref{eqn: rho_k}),  we have
$$\sum_{k=1}^N \rho_k(x) \partial^- T_k  =\sum_{k=1}^N 0\cdot \partial^- T_k = 0. $$
Using it, (\ref{eqn boundary_S_k}) and (\ref{eq: define c_l small}), 
\begin{eqnarray*}
\sum_{k=1}^N \rho_k(x) \partial^+T_k &=& 
\sum_{k=1}^N \rho_k(x) \partial^+T_k
-\sum_{k=1}^N \rho_k(x) \partial^-T_k \\
&=& 
\sum_{k=1}^N \partial S_k = 
\sum_{\ell=1}^{N_2} \partial S_{k_\ell}
=
\sum_{\ell=1}^{N_2} c_\ell \cdot (\delta_{y_{j'_\ell}} - \delta_{y_{j_\ell}} ) \\
&=&
\begin{bmatrix}
c_1,c_2,\cdots,c_{N_2}
\end{bmatrix}
M
\begin{bmatrix}
\delta_{y_1}\\
\delta_{y_2}\\
\vdots   \\
\delta_{y_{N_2}}
\end{bmatrix}=
\begin{bmatrix}
0,0,\cdots,0
\end{bmatrix}
\begin{bmatrix}
\delta_{y_1}\\
\delta_{y_2}\\
\vdots   \\
\delta_{y_{N_2}}
\end{bmatrix}  = 0.
\end{eqnarray*}

Furthermore, by (\ref{eqn boundary_S_k}), for $k=k_\ell$, with $\ell=1,2,\cdots,N_2$,
$$\|\partial^-(T_{k_\ell} \pm S_{k_\ell})\| =  \|\partial^-T_{k_\ell} \| \le c ,\quad 
\|\partial^+(T_{k_\ell} \pm S_{k_\ell})\| = \mp c_\ell \pm c_\ell +\|\partial^+ T_{k_\ell}\| \le c ,$$
and for $k \not= k_\ell$,
$\|\partial^-(T_k \pm S_k)\| = \|\partial^-T_k\| \le c$ and 
$\|\partial^+(T_k \pm S_k)\| = \|\partial^+T_k\| \le c,$ hold trivially.

As a result, $\vec{S}=(S_1,S_2,\cdots, S_N)$ satisfies the conditions of Theorem \ref{eq:thm_perturbation_capacity}.
Since $\vec{T}^*$ is optimal, by Theorem \ref{eq:thm_perturbation_capacity}, we have $\vec{S} = \vec{0}$.  
On the other hand, since the vector $[c_1,c_2,\cdots,c_{N_2}]$ is non-zero, at least one $S_{k_\ell}$ is non-zero. This leads to a contradiction.
\end{proof}

We now further explore other non-trivial properties of $\vec{T}^*$. Note that Theorem \ref{thm: X_j's mutually disjoint} says that with at most $N_2-1$ exceptions, $|X_{j_1} (\eta_k) \cap X_{j_2} (\eta_k)| =0$, i.e. $T_k$ has a ``zero common-source property". 
Since each $T_k$ is an optimal transport path from $\mu_k^-$ to $\mu_k^+$, by Proposition \ref{prop: single component less than 1 point}, $T_k$ has the ``simple common-source property" in the sense that there exists a good decomposition $\eta_k$ of $T_k$ such that $|X_{j_1} (\eta_k) \cap X_{j_2} (\eta_k)| \le 1$ for any $1 \le j_1 < j_2 \le N_2$. 
We now generalize this simple common-source property to its counterparts between different components.

\begin{theorem}[Simple Common-Source Property]
\label{General source overlap property}
Let $\mu^-$ and $\mu^+$ be defined as in equation (\ref{eq: capacity atomic measures}), and
$\alpha \in (0,1)$.
    Let $\vec{T}^*= (T_1,T_2,\cdots,T_N)$ be an $\mathbf{M}_{\alpha}$-minimizer in $Path_c(\mu^-, \mu^+)$.
For each $k=1,2,\cdots, N$, let $\eta_k$ be any good decomposition of $T_k$.
Then for all pairs $1 \le j_1 \leq j_2 \le N_2$, $1 \le k_1 < k_2 \le N$, it holds that
\begin{equation}
\label{eqn: X_j_inequality}
    |X_{j_1}(\eta_{k_1}) \cap X_{j_2}(\eta_{k_2}) | \le 1 .
\end{equation}
\end{theorem}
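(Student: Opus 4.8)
The plan is to argue by contradiction through the perturbation criterion of Theorem~\ref{eq:thm_perturbation_capacity}, mimicking the scheme of Theorem~\ref{thm: X_j's mutually disjoint} but with a single perturbation that spans the two distinct components $T_{k_1}$ and $T_{k_2}$ at once. Suppose (\ref{eqn: X_j_inequality}) fails for some admissible pair, i.e. $|X_{j_1}(\eta_{k_1}) \cap X_{j_2}(\eta_{k_2})| \ge 2$, and pick two distinct common source points $x_{i_1} \ne x_{i_2}$ in $X_{j_1}(\eta_{k_1}) \cap X_{j_2}(\eta_{k_2})$. By the definition (\ref{eqn: X_j-defn}) of these sets, the four quantities
\[ p_1 := \eta_{k_1}(\Gamma_{x_{i_1}, y_{j_1}}),\ q_1 := \eta_{k_1}(\Gamma_{x_{i_2}, y_{j_1}}),\ p_2 := \eta_{k_2}(\Gamma_{x_{i_1}, y_{j_2}}),\ q_2 := \eta_{k_2}(\Gamma_{x_{i_2}, y_{j_2}}) \]
are all strictly positive: $T_{k_1}$ carries mass along curves $x_{i_1}\to y_{j_1}$ and $x_{i_2}\to y_{j_1}$, while $T_{k_2}$ carries mass along $x_{i_1}\to y_{j_2}$ and $x_{i_2}\to y_{j_2}$. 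Note this setup is uniform in whether or not $j_1 = j_2$.

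Then I would build the perturbation $\vec{S}$, supported only on the indices $k_1, k_2$. With $s := \min\{p_1, q_1, p_2, q_2\} > 0$, set
\[ S_{k_1} := -\frac{s}{p_1}\int_{\Gamma_{x_{i_1}, y_{j_1}}} I_\gamma \, d\eta_{k_1} + \frac{s}{q_1}\int_{\Gamma_{x_{i_2}, y_{j_1}}} I_\gamma \, d\eta_{k_1}, \]
\[ S_{k_2} := \frac{s}{p_2}\int_{\Gamma_{x_{i_1}, y_{j_2}}} I_\gamma \, d\eta_{k_2} - \frac{s}{q_2}\int_{\Gamma_{x_{i_2}, y_{j_2}}} I_\gamma \, d\eta_{k_2}, \]
and $S_k := 0$ for all other $k$. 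Geometrically this is a transshipment cycle $x_{i_1}\to y_{j_1}\to x_{i_2}\to y_{j_2}\to x_{i_1}$ whose first two legs live in $T_{k_1}$ and whose last two live in $T_{k_2}$; pushing mass $s$ around it leaves every target mass unchanged while merely trading $s$ units of source mass between $x_{i_1}$ and $x_{i_2}$ inside each component. A direct computation (using $\partial I_\gamma = \delta_{p_\infty(\gamma)} - \delta_{p_0(\gamma)}$) gives $\partial S_{k_1} = s(\delta_{x_{i_1}} - \delta_{x_{i_2}})$ and $\partial S_{k_2} = s(\delta_{x_{i_2}} - \delta_{x_{i_1}})$, both concentrated on the source atoms.

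Next I would verify the three hypotheses of Theorem~\ref{eq:thm_perturbation_capacity}. For (2), since each $\partial S_{k}$ is supported on the sources we may write $\partial S_k = \rho_k(x)\,\partial T_k$ with $\rho_k$ supported on $\{x_{i_1}, x_{i_2}\}$; as $\partial^- T_{k_1}(\{x_{i_1}\}) \ge p_1 \ge s$ and similarly for the other atoms, we get $|\rho_k|\le 1$, the target balance $\sum_k \rho_k \partial^+ T_k = 0$ holds trivially, and the source balance $\sum_k \rho_k \partial^- T_k = 0$ follows from $\partial S_{k_1} + \partial S_{k_2} = 0$. For (3), the only masses that move are the source masses at $x_{i_1}, x_{i_2}$, which trade $\pm s$ and remain nonnegative (each source mass is $\ge s$) while every total mass is preserved, so $\|\partial^{\pm}(T_k \pm S_k)\| = \|\partial^{\pm} T_k\| \le c$. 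Since $\partial S_{k_1}\ne 0$, the multi-path $\vec{S}$ is nonzero; hence once (1) is in hand, Theorem~\ref{eq:thm_perturbation_capacity} forces $\vec{S} = \vec{0}$, a contradiction that establishes (\ref{eqn: X_j_inequality}).

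I expect the only delicate step to be condition (1), that each $S_k$ is on $T_k$. The two sub-currents composing $S_{k_1}$ need not have disjoint images (curves to a common $y_{j_1}$ may merge), so I cannot simply add their multiplicities pointwise and must instead appeal to the good-decomposition identity (\ref{eqn: theta(x)}) and property~(c): writing the two pieces as currents $A, B$ on $T_{k_1}$ with multiplicities $\theta^A, \theta^B$ along the common orientation $\xi_{k_1}$, the disjointness of the curve families $\Gamma_{x_{i_1},y_{j_1}}$ and $\Gamma_{x_{i_2},y_{j_1}}$ yields $\theta^A(x) + \theta^B(x) \le \theta_{k_1}(x)$ through (\ref{eqn: theta(x)}), and then $|{-\tfrac{s}{p_1}\theta^A + \tfrac{s}{q_1}\theta^B}| \le \max\{\theta^A, \theta^B\} \le \theta^A + \theta^B \le \theta_{k_1}$ because $s/p_1, s/q_1 \le 1$. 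The conceptual heart is that this is a cycle running through two \emph{different} components, so it is invisible to the individual acyclicity (Proposition~\ref{prop: single component less than 1 point}) of any single $T_k$, which is precisely why a genuinely cross-component perturbation is needed.
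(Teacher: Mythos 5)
Your proposal is correct and follows essentially the same route as the paper's own proof: assume two common source points, build a cross-component perturbation $\vec{S}$ supported on $k_1,k_2$ by transferring mass between the curve families $\Gamma_{x_{i_1},y_{j_1}},\Gamma_{x_{i_2},y_{j_1}}$ and $\Gamma_{x_{i_1},y_{j_2}},\Gamma_{x_{i_2},y_{j_2}}$ (yours is the paper's $\vec{S}$ up to sign and a harmless scaling constant, the paper taking $\epsilon_0=\tfrac14\min$ where you take the full minimum), then invoke Theorem~\ref{eq:thm_perturbation_capacity} to force $\vec{S}=\vec{0}$ and reach a contradiction. Your careful verification that each $S_k$ is on $T_k$ via (\ref{eqn: theta(x)}) and property (c) of good decompositions is a detail the paper states without elaboration, but it is the same argument, not a different one.
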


\begin{proof}
For the sake of contradiction, assume 
$$|X_{j_1}(\eta_{k_1}) \cap X_{j_2}(\eta_{k_2}) | \ge 2$$
for some $1 \le j_1 \leq j_2 \le N_2$ and $1 \le k_1 <k_2 \le N$. Thus, there exists two points, say $x_1,x_2 \in X_{j_1}(\eta_{k_1}) \cap X_{j_2}(\eta_{k_2}) $, with $x_1 \not=x_2$. By definition of $X_j(\eta_k)$ given in (\ref{eqn: X_j-defn}), it means that
$$\eta_{k_1} (\Gamma_{x_1,y_{j_1}})>0,\  \eta_{k_1} (\Gamma_{x_2,y_{j_1}})>0, \ \eta_{k_2} (\Gamma_{x_1,y_{j_2}})>0, \text{ and }\eta_{k_2} (\Gamma_{x_2,y_{j_2}})>0.$$

As in the proof of Theorem \ref{thm: X_j's mutually disjoint}, to get a contradiction with the optimality of $\vec{T}^*$, we will build a non-vanishing $\vec{S}$ satisfying the conditions in Theorem \ref{eq:thm_perturbation_capacity}. For each $k$, since $\eta_k$ is a good decomposition of $T_k\in Path(\mu_k^-,\mu_k^+)$, it follows that
\[T_k = \int_{\Gamma} I_\gamma  d\eta_k=\sum_{i=1}^{N_1} \sum_{j=1}^{N_2} \int_{\Gamma_{x_i,y_j}} I_\gamma  d\eta_k,\]
and thus
$$\partial T_{k} = \sum_{i=1}^{N_1} \sum_{j=1}^{N_2}  \eta_{k}(\Gamma_{x_i,y_j}) (\delta_{y_j}-\delta_{x_i}).$$

Let
$$\epsilon_0:= \frac{1}{4} \min\{\eta_{k_1} (\Gamma_{x_1,y_{j_1}}), \eta_{k_1} (\Gamma_{x_2,y_{j_1}}), \eta_{k_2} (\Gamma_{x_1,y_{j_2}}), \eta_{k_2} (\Gamma_{x_2,y_{j_2}})\}>0,$$
and define 
\begin{eqnarray*}
&&S_{k_1}:=\ \  \frac{\epsilon_0}{\eta_{k_1} (\Gamma_{x_1,y_{j_1}})} \int_{\Gamma_{x_1,y_{j_1}}} I_\gamma d\eta_{k_1} - \frac{\epsilon_0}{\eta_{k_1} (\Gamma_{x_2,y_{j_1}})} \int_{\Gamma_{x_2,y_{j_1}}} I_\gamma d\eta_{k_1},\\
&&S_{k_2}:= -\frac{\epsilon_0}{\eta_{k_2} (\Gamma_{x_1,y_{j_2}})} \int_{\Gamma_{x_1,y_{j_2}}} I_\gamma d\eta_{k_2} + \frac{\epsilon_0}{\eta_{k_2} (\Gamma_{x_2,y_{j_2}})} \int_{\Gamma_{x_2,y_{j_2}}} I_\gamma d\eta_{k_2},\\
&&S_k \ :=0 \text{ for } k \not= k_1,k_2.
\end{eqnarray*}
Note that
\begin{eqnarray*}
\partial S_{k_1}
&=& \frac{\epsilon_0}{\eta_{k_1} (\Gamma_{x_1,y_{j_1}})} \int_{\Gamma_{x_1,y_{j_1}}} (\delta_{y_{j_1}}-\delta_{x_1}) d\eta_{k_1} - \frac{\epsilon_0}{\eta_{k_1} (\Gamma_{x_2,y_{j_1}})} \int_{\Gamma_{x_2,y_{j_1}}} (\delta_{y_{j_1}}-\delta_{x_2}) d\eta_{k_1} \\
&=& \epsilon_0 \delta_{x_2} - \epsilon_0 \delta_{x_1},
\end{eqnarray*}
and similarly $\partial S_{k_2}=\epsilon_0 \delta_{x_1} - \epsilon_0 \delta_{x_2}$. Since $x_1 \not= x_2$, both $S_{k_1}$ and $S_{k_2}$ are non-vanishing currents on $T_{k_1}$ and $T_{k_2}$, respectively.
Using it, we may express $\partial S_{k_1} = \rho_{k_1}(x) \partial T_{k_1}$, where 
$$\rho_{k_1}(x_1)= \frac{\epsilon_0}{ \sum_{j=1}^{N_2}\eta_{k_1} (\Gamma_{x_1,y_j}) }, \quad
\rho_{k_1}(x_2)= -\frac{\epsilon_0}{ \sum_{j=1}^{N_2}\eta_{k_1} (\Gamma_{x_2,y_j}) }, 
$$
and $\rho_{k_1}(x)=0$ for any $x\in spt(\partial T_{k_1})\setminus\{x_1, x_2\}=\{x_3,x_4,\cdots, x_{N_1}, y_1,y_2,\cdots, y_{N_2}\}$.
Similarly, we have $\partial S_{k_2} = \rho_{k_2}(x) \partial T_{k_2}$, where 
$$\rho_{k_2}(x_1)= -\frac{\epsilon_0}{ \sum_{j=1}^{N_2}\eta_{k_2} (\Gamma_{x_1,y_j}) }, \quad
\rho_{k_2}(x_2)= \frac{\epsilon_0}{ \sum_{j=1}^{N_2}\eta_{k_2} (\Gamma_{x_2,y_j}) }, 
$$
and $\rho_{k_2}(x)=0$ for any $x\in spt(\partial T_{k_2})\setminus\{x_1, x_2\}$.
For $k \not= k_1,k_2$, since $S_k=0$, we have $\rho_k(x) = 0$.
As a result, we express $\partial S_k = \rho_k(x) \partial T_k$ with $|\rho_k(x)| \le 1$ for each $k=1,2,\cdots,N$.

Moreover, by the values of $\rho_k(x)$ given above,
\begin{eqnarray*}
&& \sum_{k=1}^N \rho_k(x)\partial^-T_k 
=\rho_{k_1}(x)\partial^-T_{k_1}+\rho_{k_2}(x)\partial^-T_{k_2} \\
&=& 
\frac{\epsilon_0}{\sum_{j=1}^{N_2}\eta_{k_1}(\Gamma_{x_1,y_j})}\cdot \sum_{j=1}^{N_2}\eta_{k_1}(\Gamma_{x_1,y_j})\delta_{x_1} -
\frac{\epsilon_0}{\sum_{j=1}^{N_2}\eta_{k_1}(\Gamma_{x_2,y_j})}\cdot \sum_{j=1}^{N_2}\eta_{k_1}(\Gamma_{x_2,y_j})\delta_{x_2}   \\
&& -
\frac{\epsilon_0}{\sum_{j=1}^{N_2}\eta_{k_2}(\Gamma_{x_1,y_j})}\cdot \sum_{j=1}^{N_2}\eta_{k_2}(\Gamma_{x_1,y_j})\delta_{x_1} + 
\frac{\epsilon_0}{\sum_{j=1}^{N_2}\eta_{k_2}(\Gamma_{x_2,y_j})}\cdot \sum_{j=1}^{N_2}\eta_{k_2}(\Gamma_{x_2,y_j})\delta_{x_2}\\
&=&
0,    
\end{eqnarray*}
and
$$\sum_{k=1}^N \rho_k(x)\partial^+T_k = \sum_{k=1}^N 0 = 0.$$

We now check condition (3) in the statement of Theorem \ref{eq:thm_perturbation_capacity}.
\begin{itemize}
    \item When $k={k_1}$, $\|\partial^+(T_{k_1} \pm S_{k_1})\| = \|\partial^+T_{k_1}\| \le c ,$ and 
\begin{eqnarray*}
&&\|\partial^- (T_{k_1} \pm S_{k_1} )\| \\
&=& 
\left(\sum_{j=1}^{N_2}\eta_{k_1} (\Gamma_{x_1,y_j}) \right)\pm \epsilon_0 + 
\left(\sum_{j=1}^{N_2}\eta_{k_1} (\Gamma_{x_2,y_j}) \right)\mp \epsilon_0 + \sum_{i=3}^{N_1} \sum_{j=1}^{N_2}\eta_{k_1} (\Gamma_{x_i,y_j})  \\
&=&
\sum_{i=1}^{N_1} \sum_{j=1}^{N_2}\eta_{k_1} (\Gamma_{x_i,y_j}) 
= \|\partial^- T_{k_1}\| \le c.
\end{eqnarray*}
Similarly, for $k={k_2}$ we have $\|\partial^+ (T_{k_2} \pm S_{k_2})\| \le c$ and $\|\partial^- (T_{k_2} \pm S_{k_2})\| \le c$.

\item When $k \not= k_1,k_2$, since $S_k$'s are zero currents, we trivially have
$$\|\partial^-(T_k \pm S_k)\| = \|\partial^-T_k\| \le c, \ 
\|\partial^+(T_k \pm S_k)\| = \|\partial^+T_k\| \le c.$$
\end{itemize}

Thus, $\vec{S}=(S_1,S_2,\cdots, S_N)$ satisfies the conditions of Theorem \ref{eq:thm_perturbation_capacity}.
Since $T$ is optimal, by Theorem \ref{eq:thm_perturbation_capacity}, we have $\vec{S} = \vec{0}$, contradicting with the non-vanishing $S_{k_1},S_{k_2}$ constructed above.
As a result, for all $1 \le j_1 \leq j_2 \le N_2$, $1 \le k_1 <k_2 \le N$,
$$|X_{j_1}(\eta_{k_1}) \cap X_{j_2}(\eta_{k_2}) | \le 1 .$$
\end{proof}

\section{Case study: Single target}
As an example, in this section, we would like to investigate the case of a single target. Namely, let
$$\mu^- = \sum_{i=1}^{N_1} m'_i \delta_{x_i}, \ \mu^+ = m_1 \delta_{y} \text{ with } m_1=\sum_{i=1}^{N_1} m'_i,$$ 
$\vec{T}^*= (T_1,T_2,\cdots,T_N) $ be an $\alpha$-optimal multi-path in $Path_c(\mu^-,\mu^+)$ for some $\alpha \in (0,1)$ and $c>0$. For each $k=1,2,\cdots, N$, let $\eta_k$ be a good decomposition
of $T_k$. Thus, 
\[T_k = \int_{\Gamma} I_\gamma  d\eta_k=\sum_{i=1}^{N_1} \int_{\Gamma_{x_i,y}} I_\gamma  d\eta_k,\]
and 
\begin{equation}
\label{eqn: boundary_T_k}
    \partial T_{k} = \sum_{i=1}^{N_1} \eta_{k}(\Gamma_{x_i,y}) (\delta_{y}-\delta_{x_i})=\left(\sum_{i=1}^{N_1} \eta_{k}(\Gamma_{x_i,y})\right) \delta_{y}-\sum_{i=1}^{N_1} \eta_{k}(\Gamma_{x_i,y}) \delta_{x_i}.
\end{equation}

For simplicity, we also denote 
$$X(\eta_k) := \{x_i\in X : \eta_k(\Gamma_{x_i,y})>0\}.$$
By Theorem \ref{General source overlap property}, when $k_1 \not= k_2$, it holds that
\begin{equation}
\label{eq:supple1}
|X(\eta_{k_1}) \cap X(\eta_{k_2})|\le 1.    
\end{equation}

\begin{proposition}\label{eq:prop_suppeq1}
In the single target case, suppose 
$|X(\eta_{k_1}) \cap X(\eta_{k_2})| = 1$ for some $k_1 \not= k_2$,
then either $\|\mu_{k_1}^-\|=c$ or $\|\mu_{k_2}^-\|=c$.
\end{proposition}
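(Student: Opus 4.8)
The plan is to argue by contradiction through a perturbation, exactly in the spirit of the proofs of Theorems \ref{thm: X_j's mutually disjoint} and \ref{General source overlap property}. Suppose, to the contrary, that both $\|\mu_{k_1}^-\| < c$ and $\|\mu_{k_2}^-\| < c$. Let $x_0$ denote the unique point of $X(\eta_{k_1}) \cap X(\eta_{k_2})$; by definition $\eta_{k_1}(\Gamma_{x_0,y}) > 0$ and $\eta_{k_2}(\Gamma_{x_0,y}) > 0$, and since a good decomposition is supported on non-constant curves we also have $x_0 \neq y$. The idea is to transfer a small amount of the flow emitted by $x_0$ between the two components $T_{k_1}$ and $T_{k_2}$; the strict capacity slack $c - \|\mu_{k_1}^-\| > 0$ and $c - \|\mu_{k_2}^-\| > 0$ is precisely what makes such a transfer admissible.

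Concretely, for a parameter $\epsilon_0 > 0$ to be fixed, I would set
\[
S_{k_1} := \frac{\epsilon_0}{\eta_{k_1}(\Gamma_{x_0,y})} \int_{\Gamma_{x_0,y}} I_\gamma \, d\eta_{k_1}, \quad
S_{k_2} := -\frac{\epsilon_0}{\eta_{k_2}(\Gamma_{x_0,y})} \int_{\Gamma_{x_0,y}} I_\gamma \, d\eta_{k_2},
\]
and $S_k := 0$ for all other $k$. A direct computation gives $\partial S_{k_1} = \epsilon_0(\delta_y - \delta_{x_0})$ and $\partial S_{k_2} = \epsilon_0(\delta_{x_0} - \delta_y)$, so both are non-vanishing currents (as $x_0 \neq y$). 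Writing $\partial S_k = \rho_k(x)\partial T_k$ via (\ref{eqn: boundary_T_k}), one reads off $\rho_{k_1}(x_0) = \epsilon_0 / \eta_{k_1}(\Gamma_{x_0,y})$ and $\rho_{k_1}(y) = \epsilon_0 / \|\mu_{k_1}^-\|$ (with opposite signs for $k_2$) and $\rho_k \equiv 0$ elsewhere. I would then fix $\epsilon_0$ so small that
\[
0 < \epsilon_0 \le \min\Big\{ \eta_{k_1}(\Gamma_{x_0,y}),\, \eta_{k_2}(\Gamma_{x_0,y}),\, c - \|\mu_{k_1}^-\|,\, c - \|\mu_{k_2}^-\| \Big\},
\]
which is possible precisely because of the contradiction hypothesis.

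With this choice I would verify the three hypotheses of Theorem \ref{eq:thm_perturbation_capacity}. Condition (1) (that $S_{k_i}$ is on $T_{k_i}$) follows from property (c) of good decompositions, equation (\ref{eqn: tilde_theta_x}), applied to the submeasure $\tfrac{\epsilon_0}{\eta_{k_i}(\Gamma_{x_0,y})}\,\eta_{k_i}\lfloor\Gamma_{x_0,y} \le \eta_{k_i}$, which yields the same orientation and a multiplicity bounded by that of $T_{k_i}$. Condition (2) reduces to $|\rho_k| \le 1$ (immediate from the bound on $\epsilon_0$) together with $\sum_k \rho_k \partial^- T_k = \epsilon_0\delta_{x_0} - \epsilon_0\delta_{x_0} = 0$ and $\sum_k \rho_k \partial^+ T_k = \epsilon_0\delta_y - \epsilon_0\delta_y = 0$. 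Condition (3) is the crucial one: the measures $\partial^{\pm}(T_{k_1}\pm S_{k_1})$ and $\partial^{\pm}(T_{k_2}\pm S_{k_2})$ have total masses $\|\mu_{k_1}^-\| \pm \epsilon_0$ and $\|\mu_{k_2}^-\|\pm\epsilon_0$, so the bound $\epsilon_0 \le \min\{c - \|\mu_{k_1}^-\|, c - \|\mu_{k_2}^-\|\}$ guarantees all of them are $\le c$ (the masses of the untouched components are unchanged). Hence $\vec S$ is an admissible non-vanishing perturbation, so Theorem \ref{eq:thm_perturbation_capacity} forces $\vec S = \vec 0$, contradicting $S_{k_1}, S_{k_2} \neq 0$. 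Therefore at least one of $\|\mu_{k_1}^-\|$, $\|\mu_{k_2}^-\|$ equals $c$.

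The main obstacle is bookkeeping rather than a deep difficulty: one must confirm that rerouting the flow of the single shared source $x_0$ keeps every component balanced and within capacity. The entire content of the statement sits in Condition (3): the hypothesis that \emph{both} component masses are strictly below $c$ is exactly the slack that makes the transfer admissible, which is why the conclusion asserts that this slack cannot be present in both simultaneously. Unlike the proof of Theorem \ref{General source overlap property}, the single-target setting needs no rank/matrix argument, since there is only one target and the two curve families over $\Gamma_{x_0,y}$ already carry all the mass that must be moved.
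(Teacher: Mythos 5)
Your proposal is correct and follows essentially the same argument as the paper's own proof: the same contradiction hypothesis, the same perturbation $\vec S$ built from the curves in $\Gamma_{x_0,y}$ under both good decompositions, the same choice of $\epsilon_0$ bounded by the curve-family masses and the capacity slacks, and the same application of Theorem \ref{eq:thm_perturbation_capacity} to force $\vec S=\vec 0$. The only (harmless) differences are elaborations the paper leaves implicit, such as justifying condition (1) via property (c) of good decompositions and noting $x_0\neq y$.
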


\begin{proof}
Without loss of generality, assume $k_1=1,k_2=2$, and let $X(\eta_{1}) \cap X(\eta_{2})=\{x_1\}$, which implies that
$$
\eta_1(\Gamma_{x_1,y})>0, \text{ and } \eta_2(\Gamma_{x_1,y})>0 .$$

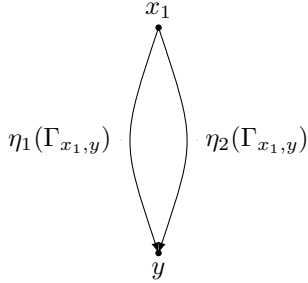
\begin{figure}[h]
\begin{tikzpicture}[>=latex]
\filldraw[black] (0,0) circle (1pt) node[anchor=north]{$y$};
\filldraw[black] (0,3) circle (1pt) node[anchor=south]{$x_1$};

\filldraw[black] (-0.5,1.5) circle (0pt) node[anchor=east]{$\eta_1(\Gamma_{x_1,y})$};
\filldraw[black] ( 0.5,1.5) circle (0pt) node[anchor=west]{$\eta_2(\Gamma_{x_1,y})$};

\draw[->] (0,3) .. controls (-0.5,1.5).. (0,0);
\draw[->] (0,3) .. controls ( 0.5,1.5).. (0,0);

\end{tikzpicture}
\label{fig: supp_eq_1}
\caption{$T_1$ and $T_2$}
\end{figure}

For the sake of contradiction, assume that
both $\|\mu_1^-\|<c$ and $\|\mu_2^-\| < c$. Let 
$$ \epsilon_0 := \min\left\{\eta_1(\Gamma_{x_1,y}),\eta_2(\Gamma_{x_1,y}), c-\|\mu_1^-\|, c-\|\mu_2^-\|\right\}>0.$$
Consider $$\vec{S} = (S_1,S_2,S_3,\cdots,S_N),$$ where 
$$S_1 := \frac{\epsilon_0}{\eta_1(\Gamma_{x_1,y})}\int_{\Gamma_{x_1,y}} I_\gamma d \eta_1, \ S_2:= -\frac{\epsilon_0}{\eta_2(\Gamma_{x_1,y})}\int_{\Gamma_{x_1,y}} I_\gamma d \eta_2,$$
and $S_k :=0$ for $k \ge 3$. By construction, each $S_k$ is on $T_k$ for all $k=1,2,\cdots,N$.
Now we may express $\partial S_{k} = \rho_{k}(x) \partial T_{k}$ as follows.
Since $\partial T_1$ is given in (\ref{eqn: boundary_T_k}) and
$$\partial S_1= \frac{\epsilon_0}{\eta_1(\Gamma_{x_1,y})} \int_{\Gamma_{x_1,y}} \partial I_\gamma d \eta_1 =
\frac{\epsilon_0}{\eta_1(\Gamma_{x_1,y})} \int_{\Gamma_{x_1,y}} (\delta_{y}-\delta_{x_1}) d \eta_1 = \epsilon_0\delta_y -\epsilon_0\delta_{x_1}, $$
it follows that $\rho_1(x_1)= \epsilon_0 / \eta_1(\Gamma_{x_1,y})$, $\rho_1(y)= \epsilon_0 /  \sum_{i=1}^{N_1} \eta_1 (\Gamma_{x_i,y})$, and $\rho_1(x)=0$ for $x\neq x_1,y$.
Note that $|\rho_1(x)| \le 1$. 

In general, for each $k=1,2,\cdots,N$, we get $\partial S_k = \rho_k(x)\partial T_k$, with $|\rho_k(x)| \le 1$ where
$$
\begin{array}{c|ccc}
     & x=x_1 & x=y &  \textrm{otherwise} \\
     \hline
\rho_1(x) & \epsilon_0/\eta_1(\Gamma_{x_1,y}) & \epsilon_0/\sum_{i=1}^{N_1} \eta_1 (\Gamma_{x_i,y}) & 0 \\
\rho_2(x) & -\epsilon_0/\eta_2(\Gamma_{x_1,y}) &-\epsilon_0/\sum_{i=1}^{N_1} \eta_2 (\Gamma_{x_i,y}) & 0 \\ \rho_{k}(x),k\ge 3 & 0 & 0 & 0 \\
\end{array}$$

Moreover, using the values of $\rho_k$ given above, we have
$$\sum_{k=1}^N \rho_k(x)\partial^-T_k = \frac{\epsilon_0}{\eta_1(\Gamma_{x_1,y})}\cdot \eta_1(\Gamma_{x_1,y})\delta_{x_1} -\frac{\epsilon_0}{\eta_2(\Gamma_{x_1,y})}\cdot \eta_2(\Gamma_{x_1,y})\delta_{x_1} = 0  ,$$
and
$$\sum_{k=1}^N \rho_k(x)\partial^+T_k  = \frac{\epsilon_0}{\sum_{i=1}^{N_1} \eta_1 (\Gamma_{x_i,y})} \cdot \sum_{i=1}^{N_1} \eta_1 (\Gamma_{x_i,y}) \delta_y  - 
\frac{\epsilon_0}{\sum_{i=1}^{N_1} \eta_2 (\Gamma_{x_i,y})} \cdot \sum_{i=1}^{N_1} \eta_2 (\Gamma_{x_i,y}) \delta_y = 0 .$$

In the end, we check the condition (3) of Theorem \ref{eq:thm_perturbation_capacity}
\begin{itemize}
\item When $k=1,2$, 
since $\|\mu_k^-\| = \| \mu_k^+ \|$, 
$$\|\partial^-(T_1 \pm S_1)\| = \eta_1 (\Gamma_{x_1,y}) \pm \epsilon_0 +  \sum_{i=2}^{N_1} \eta_1 (\Gamma_{x_i,y})=
\pm \epsilon_0 + \|\mu_1^-\| \le c,$$
$$\|\partial^+(T_1 \pm S_1)\| = \pm \epsilon_0 +  \sum_{i=1}^{N_1} \eta_1 (\Gamma_{x_i,y}) =\pm\epsilon_0 + \|\mu_1^-\| \le c,$$
$$\|\partial^-(T_2 \pm S_2)\| = \eta_2 (\Gamma_{x_1,y}) \mp \epsilon_0 +  \sum_{i=2}^{N_1} \eta_2 (\Gamma_{x_i,y})=
\mp \epsilon_0 + \|\mu_2^-\| \le c,$$
$$\|\partial^+(T_2 \pm S_2)\| = \mp \epsilon_0 +  \sum_{i=1}^{N_1} \eta_2 (\Gamma_{x_i,y}) =\mp\epsilon_0 + \|\mu_2^-\| \le c,$$

\item When $k \ge 3$, 
$$\|\partial^-(T_k \pm S_k)\| = \|\partial^-T_k\| \le c, \  \|\partial^+(T_k \pm S_k)\| = \|\partial^+T_k\| \le c .$$
\end{itemize}
Theorem $\ref{eq:thm_perturbation_capacity}$ implies for $\alpha \in (0,1)$, each $S_k$ is a vanishing current, but $S_1,S_2$ constructed above are non-vanishing, and this leads to a contradiction.
Hence, either $\|\mu_1^-\|=c$ or $\|\mu_2^-\|=c$.
\end{proof}

\begin{corollary} 
\label{cor: intersection capacity}
In the single target case, suppose 
$$\bigcap_{\ell=1}^{n} X(\eta_{k_\ell}) \neq \emptyset
\text{ for some } n\ge 2.$$ 
Then at most one of the $\mu_{k_\ell}$ has $\| \mu_{k_\ell} \| <c$, and any other $\mu_{k_\ell}$'s have mass $\|\mu_{k_\ell}\| =c$.
\end{corollary}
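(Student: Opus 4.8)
The plan is to bootstrap from the two preceding single-target results: the pairwise upper bound \eqref{eq:supple1} supplied by Theorem \ref{General source overlap property}, and the capacity dichotomy of Proposition \ref{eq:prop_suppeq1}. The guiding observation is that a \emph{nonempty} total intersection forces each \emph{pairwise} intersection to be a singleton, and once every pair shares exactly one source point the dichotomy can be invoked on each pair.

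First I would fix a point $x^*$ in the common intersection $\bigcap_{\ell=1}^{n} X(\eta_{k_\ell})$, which exists by hypothesis. For any two distinct indices $\ell_1 \ne \ell_2$ in $\{1,\dots,n\}$, the inclusion $x^* \in X(\eta_{k_{\ell_1}}) \cap X(\eta_{k_{\ell_2}})$ shows this pairwise intersection is nonempty, while \eqref{eq:supple1} bounds its cardinality by $1$. Hence $|X(\eta_{k_{\ell_1}}) \cap X(\eta_{k_{\ell_2}})| = 1$ for every such pair. This is the step that genuinely uses the nonemptiness of the common intersection (the hypothesis $n \ge 2$ merely guarantees that pairs exist), and it is precisely where the shared point $x^*$ upgrades the bound from ``$\le 1$'' to ``$=1$''.

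Next I would apply Proposition \ref{eq:prop_suppeq1} to each pair: since $|X(\eta_{k_{\ell_1}}) \cap X(\eta_{k_{\ell_2}})| = 1$, at least one of $\|\mu_{k_{\ell_1}}\|$ and $\|\mu_{k_{\ell_2}}\|$ equals $c$. The conclusion then follows by a short contradiction argument: if two distinct components satisfied $\|\mu_{k_{\ell_1}}\| < c$ and $\|\mu_{k_{\ell_2}}\| < c$ simultaneously, the pair $(\ell_1,\ell_2)$ would contradict the proposition. Therefore at most one of the $\mu_{k_\ell}$ can have mass strictly below $c$, and since the capacity constraint already gives $\|\mu_{k_\ell}\| \le c$ for all $\ell$, every remaining component has mass exactly $c$.

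Since the argument is purely combinatorial bookkeeping layered on top of the two cited results, I do not anticipate a genuine obstacle; the only point requiring care is the singleton upgrade in the second paragraph, which is the lone place the geometric hypothesis enters.
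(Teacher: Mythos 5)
Your proposal is correct and follows essentially the same argument as the paper: upgrade the pairwise bound $|X(\eta_{k_{\ell_1}})\cap X(\eta_{k_{\ell_2}})|\le 1$ from (\ref{eq:supple1}) to equality using the nonempty common intersection, then apply Proposition \ref{eq:prop_suppeq1} and conclude by contradiction that two components cannot both have mass strictly below $c$. The paper merely phrases this with the contradiction hypothesis stated first; the mathematical content is identical.
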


\begin{proof}
Suppose there exist two components $\mu_{k_1},\mu_{k_2}$ with $\|\mu_{k_1}\|<c$ and $\|\mu_{k_2}\| <c$.
By equation (\ref{eq:supple1}), we have $| X(\eta_{k_1}) \cap X(\eta_{k_2})| \le 1$. 
Since $X(\eta_{k_1}) \cap X(\eta_{k_2})$ is non-empty, then $|X(\eta_{k_1}) \cap X(\eta_{k_2})| = 1$.
Proposition \ref{eq:prop_suppeq1} implies  $\|\mu_{k_1}\|=c$ or $\|\mu_{k_2}\|=c$, which leads to a contradiction. 
\end{proof}

In the following examples, denote the line segment from $x$ to $y$ as $\overline{xy}$. 
Also, denote $a \llbracket \gamma \rrbracket$ as the rectifiable $1$-current, with density equals $a$, supported on the curve $\gamma$, and has direction along this curve. Let $\lceil x \rceil$ denote the smallest integer $n$ such that $n\ge x$.

\begin{example}
Suppose $\mu^- = m_1 \delta_{x} ,\ \mu^+ = m_1 \delta_y $ for some $m_1>0$, $c>0$ and $\vec{T}^*=(T_1,T_2,\cdots, T_N) \in Path_c(\mu^-,\mu^+)$ is $\alpha$-optimal for some $0<\alpha<1$. 
Then, up to a permutation of component indices,
$$T_1,T_2,\cdots,T_{N-1} = c\llbracket \overline{xy} \rrbracket,\ T_N = r \llbracket \overline{xy} \rrbracket, $$ with $N=\lceil m_1/c \rceil$ and $r= m_1 -(N-1)c$.

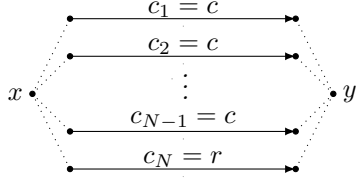
\begin{figure}[h]
\begin{tikzpicture}[>=latex]
\filldraw[black] (-2,0) circle (1pt) node[anchor=east]{$x$};
\filldraw[black] (2,0) circle (1pt) node[anchor=west]{$y$};

\filldraw[black] (-1.5,1) circle (1pt) node[anchor=east]{ };
\filldraw[black] (-1.5,0.5) circle (1pt) node[anchor=east]{ };

\filldraw[black] (-1.5,-0.5) circle (1pt) node[anchor=east]{ };
\filldraw[black] (-1.5,-1) circle (1pt) node[anchor=east]{ };

\filldraw[black] (1.5,1) circle (1pt) node[anchor=east]{ };
\filldraw[black] (1.5,0.5) circle (1pt) node[anchor=east]{ };
\filldraw[black] (-0.15,0.2) circle (0pt) node[anchor=west]{\vdots };
\filldraw[black] (1.5,-0.5) circle (1pt) node[anchor=east]{ };
\filldraw[black] (1.5,-1) circle (1pt) node[anchor=east]{ };

\filldraw[black] (0,0.9) circle (0pt) node[anchor=south]{$c_1=c$ };
\filldraw[black] (0,0.4) circle (0pt) node[anchor=south]{$c_2=c$ };
\filldraw[black] (0,-0.6) circle (0pt) node[anchor=south]{$c_{N-1}=c$ };
\filldraw[black] (0,-1.1) circle (0pt) node[anchor=south]{$c_N=r$ };

\draw[dotted] (-2,0) -- (-1.5,1); 
\draw[dotted] (-2,0) -- (-1.5,0.5); 
\draw[dotted] (-2,0) -- (-1.5,-1); 
\draw[dotted] (-2,0) -- (-1.5,-0.5); 

\draw[dotted] (2,0) -- (1.5,1); 
\draw[dotted] (2,0) -- (1.5,0.5); 
\draw[dotted] (2,0) -- (1.5,-1); 
\draw[dotted] (2,0) -- (1.5,-0.5);

\draw[->] (-1.5,1)--(1.5,1) ;
\draw[->] (-1.5,0.5)--(1.5,0.5) ;
\draw[->] (-1.5,-0.5)--(1.5,-0.5) ;
\draw[->] (-1.5,-1)--(1.5,-1) ;

\end{tikzpicture}
\label{fig: single point to single point}
\caption{Transport multi-path components from $1$ point to $1$ point.}
\end{figure}

\begin{proof}  
Since the minimum path between two points in $\mathbb{R}^m$ is a line segment, by the optimality of $\vec{T}^*$, each $T_k = c_k \llbracket \overline{xy} \rrbracket$ for some $0 < c_k \le c$. On the other hand, as the only source point, $x\in \bigcap_{k=1}^{N} X(\eta_{k})$.
By Corollary \ref{cor: intersection capacity} , for $k=1,2,\cdots,N$, there is at most one component $T_k$ (without loss of generality assume this component is $T_N$) such that $T_N= r\llbracket \overline{xy} \rrbracket, r \in (0,c]$, and any other components are $T_k = c\llbracket \overline{xy} \rrbracket$.
The total number of components required is $N=\lceil m_1/c \rceil$, and since there is only one component that has mass less or equal to $c$, then $r = m_1 - (N-1)c$.
\end{proof}
\end{example}

\begin{example}
Suppose $\mu^- = m_1 \delta_{x_1} + m_2 \delta_{x_2},\ \mu^+ = (m_1 + m_2) \delta_y$, and $\vec{T}^* = (T_1,T_2,\cdots,T_N) \in Path_c(\mu^-,\mu^+)$ is optimal with $N\ge 2$.
Then, there exists an integer $1 \le n \leq N-1$ such that  up to a permutation of component indices, $$T_1= T_2=\cdots=T_{n-1} = c\llbracket \overline{x_1y} \rrbracket,\ T_{n +2}=T_{n +3}=\cdots=T_{N} = c\llbracket \overline{x_2y} \rrbracket$$ and 
one of the following cases hold:
\begin{itemize}
\item [Case 1:]
$ T_{n} = \theta_1\llbracket \overline{x_1y} \rrbracket, \ 
T_{n+1} =\theta_2 \llbracket \overline{x_2y} \rrbracket,$
where 
\begin{equation}
\label{eqn: n_theta}
n=\lceil m_1/c \rceil, \ \theta_1 = m_1 - (n-1)c \ \in (0, c] \text{ and } \theta_2 = m_2 - (N-n-1) c \ \in (0, c].
\end{equation}

\item [Case 2:]
$T_{n} = \theta_1\llbracket \overline{x_1y} \rrbracket \,  \text{ for some } \theta_1\in (0,c]  \text{ and }
T_{n+1} \text{ is Y-shaped}.$

\item [Case 3:]
$T_{n} \text{ is Y-shaped, and } T_{n+1}= \theta_2\llbracket \overline{x_2y} \rrbracket \text{ for some } \theta_2\in (0,c)$.
\end{itemize}

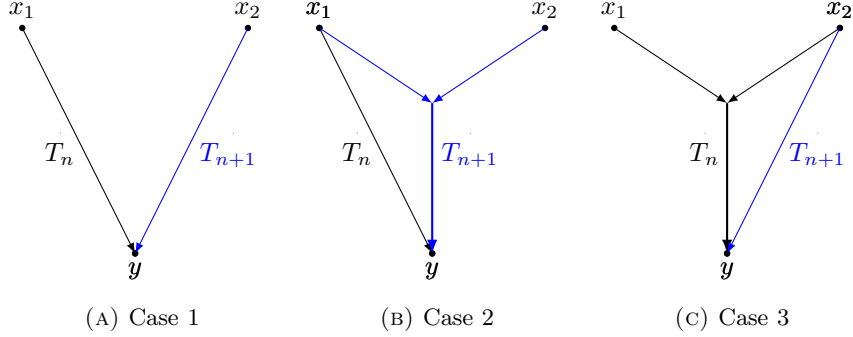
\begin{figure}[h]
\begin{subfigure}{0.3\textwidth}
\centering
\begin{tikzpicture}[>=latex]
\filldraw[black] (0,0) circle (1pt) node[anchor=north]{$y$};
\filldraw[black] (-1.5,3) circle (1pt) node[anchor=south]{$x_1$};

\filldraw[black] (-1,1.6) circle (0pt) node[anchor=north]{$T_{n}$ };
\draw[->] (-1.5,3) -- (0,0);

\filldraw[black] (0,0) circle (1pt) node[anchor=north]{$y$};
\filldraw[black] (1.5,3) circle (1pt) node[anchor=south]{$x_2$};

\filldraw[black] (1.3,1.6) circle (0pt) node[anchor=north]{\textcolor{blue}{$T_{n+1}$  } };
\draw[blue,->] (1.5,3) -- (0,0);

\end{tikzpicture}
\label{fig: Case 1}
\caption{Case 1}
\end{subfigure}
\begin{subfigure}{0.3\textwidth}
\centering
\begin{tikzpicture}[>=latex]
\filldraw[black] (0,0) circle (1pt) node[anchor=north]{$y$};
\filldraw[black] (-1.5,3) circle (1pt) node[anchor=south]{$x_1$};

\filldraw[black] (-1,1.6) circle (0pt) node[anchor=north]{$T_n$ };
\draw[->] (-1.5,3) -- (0,0);

\filldraw[black] (0,0) circle (1pt) node[anchor=north]{$y$};
\filldraw[black] (-1.5,3) circle (1pt) node[anchor=south]{$x_1$};
\filldraw[black] (1.5,3) circle (1pt) node[anchor=south]{$x_2$};

\filldraw[black] (0.5,1.6) circle (0pt) node[anchor=north]{\textcolor{blue}{$T_{n+1}$} };

\draw[blue,->] (-1.5,3) -- (0,2);
\draw[blue,->] ( 1.5,3) -- (0,2);
\draw[blue,thick,->]  (0,2)--(0,0);

\end{tikzpicture}
\label{fig: Case 2}
\caption{Case 2}
\end{subfigure}
\begin{subfigure}{0.3\textwidth}
\centering
\begin{tikzpicture}[>=latex]
\filldraw[black] (0,0) circle (1pt) node[anchor=north]{$y$};
\filldraw[black] (-1.5,3) circle (1pt) node[anchor=south]{$x_1$};
\filldraw[black] (1.5,3) circle (1pt) node[anchor=south]{$x_2$};

\draw[->] (-1.5,3) -- (0,2);
\draw[->] ( 1.5,3) -- (0,2);
\draw[thick,->]  (0,2)--(0,0);

\filldraw[black] (-0.3,1.6) circle (0pt) node[anchor=north]{$T_{n}$ };

\filldraw[black] (0,0) circle (1pt) node[anchor=north]{$y$};
\filldraw[black] (1.5,3) circle (1pt) node[anchor=south]{$x_2$};

\filldraw[black] (1.2,1.6) circle (0pt) node[anchor=north]{\textcolor{blue}{$T_{n+1}$}  };
\draw[blue,->] (1.5,3) -- (0,0);

\end{tikzpicture}
\label{fig: Case 3 }
\caption{Case 3}
\end{subfigure}

\label{fig: different cases of 2 points to 1 point }
\caption{Different cases of transport multi-paths from $2$ source points to $1$ target point.}
\end{figure}

\begin{proof}
We first observe that there is at most one $k=1,2,\cdots, N$ such that $|X(\eta_k)|=2$. Indeed, 
suppose  $|X(\eta_{k_1})| = |X(\eta_{k_2})|  =2$ 
for some $k_1,k_2 \in \{1,2,\cdots, N\}$ with $k_1 \neq k_2$.
Since $|supp (\mu^-)| = |\{x_1, x_2\}| =2$, it follows that  $X(\eta_{k_1}) = \{x_1, x_2\}=X(\eta_{k_2})$, 
which contradicts the result $|X(\eta_{k_1})\cap X(\eta_{k_2})|  \le 1$  given in equation  (\ref{eq:supple1}). As a result, there is at most one $k$ such that $|X(\eta_k)|=2$. 
In other words, among all $N$ components of $\vec{T}$, there is at most one ``Y-shaped" path and all others are straight line segments. 
Up to a permutation of indices, we may list components from $x_1$ to $y$ first, then the ``Y-shaped" one if any, and then the line segments from $x_2$ to $y$. 
Moreover, by Proposition $\ref{eq:prop_suppeq1}$,  with at most one exception, the density on each line segment from $x_i$ to $y$ reaches its maximum capacity $c$ for each $i=1,2$.

\begin{itemize}
    \item[Case 1:] When $\vec{T}$ has no ``Y-shaped" component, let $n, \theta_1, \theta_2$ be as in (\ref{eqn: n_theta}). Then,  up to a permutation of indexes, we have
    $$T_1= T_2=\cdots=T_{n-1} = c\llbracket \overline{x_1y} \rrbracket,\ T_{n} = \theta_1\llbracket \overline{x_1y} \rrbracket, \ 
T_{n+1} =\theta_2 \llbracket \overline{x_2y} \rrbracket, $$
and 
$$ T_{n +2}=T_{n +3}=\cdots=T_{N} = c\llbracket \overline{x_2y} \rrbracket.$$
\item[Case 2:] When $\vec{T}$ has one ``Y-shaped" component and all components supported on the line segment from $x_2$ to $y$ reach their maximum capacity $c$, up to a permutation of indexes, we may list components of $\vec{T}$ as
 $$T_1= T_2=\cdots=T_{n-1} = c\llbracket \overline{x_1y} \rrbracket,\ T_{n} = \theta_1\llbracket \overline{x_1y} \rrbracket, \ 
T_{n+1} \text{ is ``Y-shaped"},$$ 
and
$$\ T_{n +2}=T_{n +3}=\cdots=T_{N} = c\llbracket \overline{x_2y} \rrbracket,$$
where $\theta_1\in (0, c]$.
\item[Case 3:] When $\vec{T}$ has one ``Y-shaped" component and there exists one component supported on the line segment from $x_2$ to $y$ that does not reach its maximum capacity $c$, up to a permutation of indexes, we may list components of $\vec{T}$ as
$$T_1= T_2=\cdots=T_{n-2} = c\llbracket \overline{x_1y} \rrbracket,\ T_{n-1} = \theta_1\llbracket \overline{x_1y} \rrbracket, \ 
T_{n} \text{ is ``Y-shaped"},$$
and
$$ T_{n+1} = \theta_2\llbracket \overline{x_2y} \rrbracket, \ 
T_{n +2}=T_{n +3}=\cdots=T_{N} = c\llbracket \overline{x_2y} \rrbracket,$$
where $\theta_1\in (0, c]$ and $\theta_2\in (0, c)$. We will show that $\theta_1=c$.
For the sake of contradiction, assume $0<\theta_1<c$. 
Since $T_n$ is ``Y-shaped", we may write 
\[ T_n=a_1 I_{\gamma_{x_1,y}}+ a_2 I_{\gamma_{x_2,y}}, \]
where $a_i>0$ and $\gamma_{x_i,y}$
is a polyhedral curve from $x_i$ to $y$ for each $i=1,2$.

Let
$\epsilon_0 = \min\{\theta_1, \theta_2, c-\theta_1, c-\theta_2, a_1,a_2\} >0$, 
and define
$$\vec{S} = (S_1,S_2,\cdots,S_{N}),$$ where
$$S_{n-1} := \epsilon_0 \llbracket \overline{x_1y} \rrbracket,\ 
S_{n} := \epsilon_0 \left(I_{\gamma_{x_2,y}}- I_{\gamma_{x_1,y}}\right) ,\ 
S_{n+1} := -\epsilon_0 \llbracket \overline{x_2y} \rrbracket,
$$ 
and $S_k = 0 \mbox{ for } k \not= n-1, n , n+1$.
By construction, $S_k$ is on $T_k$ for each $k$.
The corresponding $\rho_k(x)$'s, where $\partial S_k = \rho_k(x) \partial T_k$ for each $k$, are
$$\begin{array}{c|cccc}
     & x=x_1 & x=x_2 & x=y & \textrm{otherwise} \\
     \hline
\rho_{n-1}(x) & \epsilon_0/\theta_1 & 0 & \epsilon_0/\theta_1 & 0 \\
\rho_{n}(x)& -\epsilon_0/a_1&\epsilon_0/a_2& 0 & 0 \\
\rho_{n+1}(x) & 0 & -\epsilon_0/\theta_2 & -\epsilon_0/\theta_2 & 0 \\ 
\end{array}$$
and $\rho_k(x)=0$, for $k \not= n-1,n,n+1$.  Direct calculation shows that  $\vec{S}$ constructed above satisfies the conditions in Theorem \ref{eq:thm_perturbation_capacity}. 
Since $\vec{T}$ is optimal and $\alpha \in (0,1)$, Theorem \ref{eq:thm_perturbation_capacity} gives
$\vec{S}=\vec{0}$. This contradicts with the non-vanishing $\vec{S}$ constructed above.

\end{itemize}
\end{proof}
\end{example}

\end{document}